\numberwithin{equation}{section}
\newtheorem{theorem}{Theorem}[section]
\newtheorem{lemma}[theorem]{Lemma}
\newtheorem{proposition}[theorem]{Proposition}
\newtheorem{remark}[theorem]{Remark}
\DeclareMathSymbol{\leqslant}{\mathalpha}{AMSa}{"36} 
\DeclareMathSymbol{\geqslant}{\mathalpha}{AMSa}{"3E} 
\DeclareMathSymbol{\eset}{\mathalpha}{AMSb}{"3F}     
\newcommand{\dd}{\text{\rm d}}             
\newcommand{\R}{\mathbb{R}}
\newcommand{\C}{\mathbb{C}}
\newcommand{\Z}{\mathbb{Z}}
\newcommand{\N}{\mathbb{N}}
\newcommand{\PEfont}{\mathrm}
\DeclareMathOperator{\p}{\ensuremath{\PEfont P}}
\DeclareMathOperator{\e}{\ensuremath{\PEfont E}}
\newcommand{\ind}{{\sf 1}}
\renewcommand{\epsilon}{\varepsilon} 
\renewcommand{\theta}{\vartheta} 
\renewcommand{\rho}{\varrho} 
\renewcommand{\phi}{\varphi}
\newenvironment{myenumerate}{%
\renewcommand{\theenumi}{\arabic{enumi}}%
\renewcommand{\labelenumi}{{\rm(\theenumi)}}%
\begin{list}{\labelenumi}
	{%
	\setlength{\itemsep}{0.4em}%
	\setlength{\topsep}{0.5em}%
	\setlength\leftmargin{2.45em}%
	\setlength\labelwidth{2.05em}%
	\setlength{\labelsep}{0.4em}%
	\usecounter{enumi}%
	}%
	}%
{\end{list}
}
\renewenvironment{enumerate}{
\begin{myenumerate}}%
{\end{myenumerate}}
\newenvironment{myitemize}{%
\begin{list}{$\bullet$}%
 	{%
	\setlength{\itemsep}{0.4em}%
	\setlength{\topsep}{0.5em}%
	\setlength\leftmargin{2.45em}%
	\setlength\labelwidth{2.05em}%
	\setlength{\labelsep}{0.4em}%
	}%
	}%
{\end{list}}
\title{A note on directly Riemann integrable functions}
\author{Francesco Caravenna}
\address{Dipartimento di Matematica e Applicazioni, Universit\`a
degli Studi di Milano-Bicocca, via Cozzi 53, 20125 Milano, Italy}
\email{francesco.caravenna@unimib.it}
\keywords{Directly Riemann Integrable Function, Renewal Theory,
Renewal Density Theorem, Local Limit Theorem, Convolution.}
\subjclass[2010]{60K05; 26A42; 44A35}
\date{\today}
\def\dd{\mathrm{d}}
\begin{document}

\begin{abstract}
A non-negative function $f$, defined on the real line or on a half-line, is said
to be \emph{directly Riemann integrable} (d.R.i.) if the upper and lower Riemann 
sums of $f$ over the whole (unbounded) domain converge to the same finite limit, as the 
mesh of the partition vanishes.
In this note we show that, for a Lebesgue-integrable function $f$,
very mild conditions are enough to ensure that some $n$-fold convolution of $f$ 
with itself is d.R.i..
Applications to renewal theory and to local limit theorems are discussed.
\end{abstract}

\maketitle

\section{Introduction}

\subsection{The problem}
A non-negative function $g: \R \to [0,\infty)$ is said to be
\emph{directly Riemann integrable} (d.R.i.) if its upper and lower Riemann sums
\emph{over the whole real line} converge to the same finite limit, as the mesh of the partition
vanishes:
\begin{equation} \label{eq:dRi}
	\lim_{\delta \downarrow 0} \
	\sum_{m\in\Z} \delta \, \bigg(  \sup_{z \in [m\delta, (m+1)\delta)} g(z) \bigg)
	\,=\, \lim_{\delta \downarrow 0} \
	\sum_{m\in\Z} \delta \, \bigg(  \inf_{z \in [m\delta, (m+1)\delta)} g(z) \bigg)
	\,\in\, (-\infty,+\infty) \,.
\end{equation}
If the function $g$ may also take negative values, it is said to be d.R.i.
if both its positive and negative parts $g^+$ and $g^-$ are so.
We refer to \cite[\S V.4]{cf:Asm} and \cite[\S XI.1]{cf:Fel2} for more details.



\smallskip

Every d.R.i. function is necessarily in $L^1(\R, Leb)$ and vanishes at infinity,
but the converse might fail, even for continuous functions, because of the
possible oscillations at infinity.
The aim of this note is to show that very mild conditions on
$f \in L^1(\R,Leb)$ are
enough to ensure that some convolution of $f$ with itself is d.R.i.,
cf. Theorem~\ref{th:main} below.

Beyond its intrinsic interest,
our main motivations for such a result come from local limit theorems and
renewal theory, where d.R.i. functions play an important role.
In particular, we suggest to keep in mind the special case when
$f$ is a probability density function on $\R$.

\subsection{The main result}

Given a Lebesgue-integrable function $f \in L^1(\R, Leb)$,
let us denote by $f_k(\cdot) = f^{*k}(\cdot)$ the $k$-fold convolution
of $f$ with itself, that is
\begin{equation} \label{eq:conv}
	f_1(x) \,:=\, f(x)\,, \qquad
	f_{k+1}(x) \,:=\, (f_{k} * f)(x) \,=\,
	\int_\R f_{k}(x-y) \, f(y) \, \dd y \,, \quad \forall k \in \N\,.
\end{equation}
Our main result, that we prove in section~\ref{sec:proof}, reads as follows.

\begin{theorem}\label{th:main}
Let $f \in L^1(\R,Leb)$ satisfy the following assumptions:
\begin{enumerate}
\item\label{it:1} $f_{k_0} \in L^\infty(\R,Leb)$ for some $k_0\in\N$;

\item\label{it:2} $\int_\R |x|^\epsilon |f(x)| \, \dd x \,<\, \infty$ for some $\epsilon > 0$.
\end{enumerate}
Then there exists $k_1 \in \N$ such that
for every $k\ge k_1$ the function $x \mapsto (1+|x|^\epsilon) f_k(x)$
is bounded, continuous and directly Riemann integrable.
In particular, $f_k$ itself is bounded, continuous and 
directly Riemann integrable, for every $k\ge k_1$.
\end{theorem}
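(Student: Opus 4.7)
My plan is to tackle the three properties — boundedness, continuity, and direct Riemann integrability of $(1+|x|^\epsilon)f_k$ — in that order, exploiting assumptions~(1) and~(2) as needed.

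\smallskip

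\emph{Basic bounds.} Boundedness for $k\ge k_0$ is immediate from Young's inequality $\|f_k\|_\infty\le\|f_{k_0}\|_\infty\|f\|_1^{k-k_0}$. Continuity for $k\ge k_0+1$ comes from writing $f_k=f*f_{k-1}$ with $f\in L^1$, $f_{k-1}\in L^\infty$, and using $L^1$-continuity of translations. The weighted integrability $\int(1+|x|^\epsilon)|f_k(x)|\,dx\le C_k\|h\|_1^k$, with $h(y):=(1+|y|^\epsilon)|f(y)|\in L^1$, follows from the elementary inequality $1+|x_1+\cdots+x_k|^\epsilon\le C_k\prod_i(1+|x_i|^\epsilon)$ (a consequence of $|\sum x_i|\le k\max_i|x_i|$) together with Fubini. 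Moreover, writing $f_k=f_{k_0}*f_{k-k_0}$ for $k\ge 2k_0$ and splitting the convolution integral at $|y|=|x|/2$ gives $|f_k(x)|\le C/(1+|x|^\epsilon)$ via Markov's inequality applied to the $\epsilon$-moment of $f_{k_0}$, so $(1+|x|^\epsilon)f_k\in L^\infty$.

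\smallskip

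\emph{Direct Riemann integrability.} The strategy here is an induction on $k$ based on the following auxiliary lemma, provable by swapping the order of summation and integration: \emph{if $g_1\in L^1(\R)$ and $g_2$ is non-negative, continuous, bounded and d.R.i., then $g_1*g_2$ is d.R.i.} (quantitatively $\sum_m\sup_{[m,m+1)}|g_1*g_2|\le 2\|g_1\|_1\sum_n\sup_{[n,n+1)}g_2$). Combined with the pointwise inequality $1+|x|^\epsilon\le C(1+|y|^\epsilon+|x-y|^\epsilon)$, the identity $f_{k+1}=f*f_k$ yields
\[
(1+|x|^\epsilon)|f_{k+1}(x)|\,\le\, C\bigl[(|f|+h_1)*|f_k|\,+\,|f|*h_k\bigr](x),
\]
with $h_1(y):=|y|^\epsilon|f(y)|\in L^1$ by (2) and $h_k(z):=|z|^\epsilon|f_k(z)|$. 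Assuming inductively that both $|f_k|$ and $h_k$ are d.R.i., each term on the right is d.R.i.\ by the lemma, so the continuous, bounded function $(1+|x|^\epsilon)|f_{k+1}|$ is dominated by a d.R.i.\ function and is therefore d.R.i.\ itself (which is the same as $|f_{k+1}|$ and $h_{k+1}$ being d.R.i.).

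\smallskip

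\emph{Main obstacle.} The induction requires a base case: some $k^*$ at which both $|f_{k^*}|$ and $|x|^\epsilon|f_{k^*}|$ are d.R.i. This is the main technical challenge, since the pointwise tail bound $|f_k|\le C/(1+|x|^\epsilon)$ gives summable sup over unit intervals only for $|f_{k^*}|$ itself and only when $\epsilon>1$; for $h_{k^*}$ it gives nothing, and for $\epsilon\le 1$ it gives nothing even for $|f_{k^*}|$. I expect the base case to require a further refinement of the tail decay — iterating the splitting argument for progressively larger $k$, now leveraging that $(1+|x|^\epsilon)|f_k|\in L^1\cap L^\infty$ and that $f_k$ is uniformly continuous (which rules out tall thin spikes at infinity) — in order to produce a dominating function of the form $C(1+|x|)^{-\epsilon-1-\delta}$ for a sufficiently large $k^*$ and some $\delta>0$, from which the full conclusion then follows.
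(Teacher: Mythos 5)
Your reductions (boundedness, continuity, the weighted $L^1$ bound, and the Feller-type splitting giving $|f_k(x)|\le C/(1+|x|^\epsilon)$ for $k\ge 2k_0$) are correct, and your inductive propagation step --- the convolution lemma $\sum_m\sup_{[m,m+1)}|g_1*g_2|\lesssim\|g_1\|_1\sum_n\sup_{[n,n+1)}g_2$ --- is essentially the paper's bootstrapping argument in \S\ref{sec:bootaimm}. But the proof has a genuine gap exactly where you flag the ``main obstacle'': you never establish the base case, and the route you sketch for it cannot work. A pointwise dominating bound $f_{k^*}(x)\le C(1+|x|)^{-\epsilon-1-\delta}$ with $\delta>0$ is false in general under the hypotheses: take $f$ supported on $[e,\infty)$ with $f(x)\sim c\,x^{-1-\epsilon}(\log x)^{-2}$, so that $\int|x|^\epsilon f\,\dd x<\infty$ while $\int|x|^{\epsilon+\delta}f\,\dd x=\infty$ for every $\delta>0$. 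A one-big-jump lower bound, $f_2(x)\ge\int_{|y|\le M}f(y)f(x-y)\,\dd y\gtrsim x^{-1-\epsilon}(\log x)^{-2}$ (and similarly for every $f_k$), shows that no convolution power decays faster than $x^{-1-\epsilon}(\log x)^{-2}$, which is not $O(x^{-1-\epsilon-\delta})$. So the base case must be reached without upgrading the polynomial decay rate, and that is where the whole difficulty of the theorem sits. Uniform continuity does not substitute: the standard non-d.R.i.\ examples are smooth with bounded derivatives.

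The paper's mechanism for the base case is worth contrasting with your plan. Instead of seeking pointwise decay, it controls the quantity that actually enters the Riemann sum, $\sup_{x\in[a,a+1)}f_{2^n}(x)$, by the integral over $[a-1,a+2]$ of an explicit envelope $\overline h_n$, built by iterating the operator $\Phi_\ell(h)(x)=2\int_{|z|>(|x|-3)/2}f_\ell(z)\,h(x-z)\,\dd z$ starting from the integrated tail $g_1(|\cdot|/2)$ (Lemma~\ref{th:basic}). The crucial point (Lemma~\ref{th:Phireg}) is that each application of $\Phi$ is regularizing on the $L^p$ scale: $h\in L^\infty\cap L^q$ implies $\Phi_n(h)\in L^p$ for every $p>(1-\epsilon)q$, so after finitely many iterations the exponent $(1-\epsilon)^{n-1}q$ drops below $1$ and the envelope lands in $L^1$ --- even though its pointwise decay never improves beyond order $1+\epsilon$. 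Finiteness of $\int(1+|w|^\epsilon)\overline h_n(w)\,\dd w$ then yields the finite weighted upper Riemann sum. To complete your argument you need a device of this kind for the base case; once that is in place, your induction step finishes the proof just as \S\ref{sec:bootaimm} does.
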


\subsection{Organization of the paper}
The rest of the introduction is devoted to discussing
the role of the two assumptions of Theorem~\ref{th:main}.
In section~\ref{sec:apprenth} we present some
applications of Theorem~\ref{th:main} to renewal theory
(and, more generally, to local limit theorems, cf. \S\ref{sec:appllt}).
We mention in particular Proposition~\ref{th:cor},
which provides a local version of the renewal theorem for
heavy-tailed renewal processes.
The proof of Theorem~\ref{th:main} is contained in section~\ref{sec:proof},
while some technical points are deferred to the appendix.

Henceforth we write $L^p :=
L^p(\R,Leb)$ for short.

\subsection{Discussion}
\label{sec:roleass}

A d.R.i. function is necessarily bounded, since otherwise every
upper or lower Riemann sum
would be infinite. Therefore \emph{assumption \eqref{it:1} is necessary 
for Theorem~\ref{th:main} to hold}. Let us now give a standard and more concrete
reformulation of this assumption in terms of the
Fourier transform $\widehat f(\theta) := \int_\R e^{i\theta x} f(x) \, \dd x$ of $f$.

\begin{lemma} \label{th:necsuf}
A function $f \in L^1$ satisfies assumption \eqref{it:1} of Theorem~\ref{th:main}
if and only if $\widehat f \in L^p$ for some $p \in [1,\infty)$.
\end{lemma}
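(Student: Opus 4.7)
The plan is to prove the two implications separately, each relying on elementary Fourier analysis.

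\textbf{Sufficiency ($\Leftarrow$).} Suppose $\widehat f \in L^p$ for some $p \in [1,\infty)$. Since $f \in L^1$, we also have $\widehat f \in L^\infty$ with $\|\widehat f\|_\infty \le \|f\|_1$, hence by log-convexity (or by the trivial bound $|\widehat f|^k \le \|\widehat f\|_\infty^{k-p} |\widehat f|^p$ valid whenever $k \ge p$), one obtains $\widehat f \in L^k$ for every integer $k \ge p$. Choosing $k_0 := \lceil p \rceil$, the identity $\widehat{f_{k_0}} = (\widehat f)^{k_0}$ shows that $\widehat{f_{k_0}} \in L^1$. Since $f_{k_0} \in L^1$ as well (convolution of $L^1$ functions), the Fourier inversion formula produces a bounded continuous representative of $f_{k_0}$, so in particular $f_{k_0} \in L^\infty$.

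\textbf{Necessity ($\Rightarrow$).} Conversely, suppose $f_{k_0} \in L^\infty$ for some $k_0$. Since $f_{k_0} \in L^1 \cap L^\infty$, it also lies in $L^2$, and Plancherel's theorem gives $\widehat{f_{k_0}} = (\widehat f)^{k_0} \in L^2$. Raising to the square and integrating yields $\int_\R |\widehat f(\theta)|^{2 k_0} \, \dd\theta < \infty$, i.e. $\widehat f \in L^{2k_0}$, which is the desired conclusion with $p := 2k_0$.

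The two directions are completely symmetric in structure, but the exponents differ: the integrability index $p$ for $\widehat f$ and the convolution index $k_0$ are generally not the same, only comparable up to constants. No real obstacle is expected; the only point one should state clearly is the well-known fact that $g \in L^1$ together with $\widehat g \in L^1$ implies $g$ coincides almost everywhere with a bounded continuous function, which is what powers the sufficiency direction. Both steps are standard but deserve to be spelled out since Theorem~\ref{th:main} will be applied via this Fourier criterion.
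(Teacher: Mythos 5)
Your proof is correct. The sufficiency direction ($\widehat f \in L^p$ for some finite $p$ implies $f_{k_0}\in L^\infty$) is essentially the paper's argument: bound $|\widehat f|^{k}\le \|\widehat f\|_\infty^{k-p}|\widehat f|^{p}$ for $k\ge p$ to get $\widehat{f_{k_0}}=(\widehat f)^{k_0}\in L^1$ and invoke Fourier inversion. The necessity direction, however, takes a genuinely different route. You observe that $f_{k_0}\in L^1\cap L^\infty\subseteq L^2$ and apply Plancherel to get $(\widehat f)^{k_0}\in L^2$, hence $\widehat f\in L^{2k_0}$. The paper instead forms the autocorrelation $f_{k_0}*g$ with $g(x):=f_{k_0}(-x)$, whose Fourier transform is $|\widehat f|^{2k_0}\ge 0$, and appeals to the classical fact (quoted from Feller, and stated in \eqref{eq:correspo}) that a function in $L^1\cap L^\infty$ with non-negative Fourier transform has integrable Fourier transform; this likewise yields $\widehat f\in L^{2k_0}$. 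Both arguments produce the same exponent $p=2k_0$. Yours is shorter and uses the standard $L^2$ isometry --- a tool the paper itself is happy to use in the proof of Lemma~\ref{th:suf} --- whereas the paper's version stays entirely within the two elementary Fourier facts it cites from Feller, at the cost of the extra reflection trick. Your closing remark that the indices $p$ and $k_0$ need not match, only control each other (here $k_0=\lceil p\rceil$ one way and $p=2k_0$ the other), is accurate and worth keeping.
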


\noindent
From this, we can deduce a very practical
sufficient condition.

\begin{lemma} \label{th:suf}
Assumption \eqref{it:1} of Theorem~\ref{th:main} is satisfied
if $f \in L^1 \cap L^p$, for some $p \in (1,\infty]$.
\end{lemma}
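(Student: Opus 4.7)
My plan is to invoke Lemma~\ref{th:necsuf}, which reduces the statement to showing that $\widehat f \in L^q$ for some finite $q \in [1,\infty)$. I would then split on the value of $p$.

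If $p = \infty$, there is nothing to do via Fourier analysis: $f_1 = f \in L^\infty$ itself, so assumption~\eqref{it:1} of Theorem~\ref{th:main} holds directly with $k_0 = 1$.

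If $1 < p < \infty$, the log-convexity of $L^q$-norms (a direct consequence of H\"older's inequality) implies that $f \in L^1 \cap L^p$ lies in $L^s$ for every $s \in [1,p]$. I would choose $s := \min(p,2) \in (1,2]$, and then invoke the Hausdorff--Young inequality to get $\widehat f \in L^{s'}$ with $s' := s/(s-1) \in [2,\infty)$. Since $s' < \infty$, Lemma~\ref{th:necsuf} immediately yields assumption~\eqref{it:1}.

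The only non-trivial ingredient is the Hausdorff--Young inequality, which is standard and I do not expect to be a real obstacle. A Fourier-free alternative, if one preferred, would be to iterate Young's convolution inequality $\|f \ast g\|_r \le \|f\|_s \|g\|_t$ (with $\tfrac{1}{r} = \tfrac{1}{s} + \tfrac{1}{t} - 1$) starting from $f \in L^s \cap L^1$ with $s = \min(p,2)$: each further convolution with $f$ strictly raises the integrability exponent of $f_k$, and after finitely many steps one reaches $f_{k_0} \in L^\infty$ directly, bypassing Lemma~\ref{th:necsuf} altogether.
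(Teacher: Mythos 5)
Your proposal is correct, but it takes a different route from the paper's. The paper does not interpolate down to $L^{\min(p,2)}$ and invoke Hausdorff--Young; instead it bootstraps the integrability exponent \emph{upward} through convolution: a direct H\"older estimate shows $f\in L^1\cap L^p$ implies $f_2\in L^1\cap L^{p^2}$, hence $f_{2^k}\in L^1\cap L^{p^{2^k}}$, and once $p^{2^k}\ge 2$ one has $f_{2^k}\in L^2$; Plancherel then gives $(\widehat f)^{2^k}\in L^2$, i.e.\ $\widehat f\in L^{2^{k+1}}$, and Lemma~\ref{th:necsuf} concludes. So both arguments funnel through Lemma~\ref{th:necsuf}, but the paper's only tools are H\"older and the $L^2$ isometry, whereas you import Hausdorff--Young (which is itself obtained by interpolating between the trivial $L^1\to L^\infty$ bound and Plancherel) to reach $\widehat f\in L^{s'}$ in one step. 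Your version is shorter and arguably more standard; the paper's is more self-contained and, via the same convolution-bootstrap idea, meshes with the proof of Lemma~\ref{th:necsuf} itself. Your Fourier-free alternative (iterating Young's inequality until $f_{k_0}\in L^\infty$) is also viable and closest in spirit to the paper's exponent-raising step --- just note that at the last stage, when the Young exponent would pass $\infty$, you should finish with the duality bound $\|f_k * f\|_\infty\le\|f_k\|_q\|f\|_{q'}$, which is available because $f\in L^t$ for all $t\in[1,\min(p,2)]$. Your separate treatment of $p=\infty$ is fine and in fact necessary for your main route, since the paper's H\"older display is written for finite $p$ as well.
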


\noindent
The (quite standard) proofs of these two lemmas are given
in \S\ref{sec:proofnecsuf} and~\S\ref{sec:proofsuf} below.

\begin{remark}\rm
For a function $f \in L^1$, the condition $f \in L^p$ for some $p>1$
is very mild and shows that assumption~\eqref{it:1}
is typically verified in concrete situations.
Let us mention, however, that (somewhat pathological) examples of functions
$f \in L^1$ not satisfying assumption~\eqref{it:1} do exist, cf.
Examples (a), (b), (c) in~\cite[\S XV.5]{cf:Fel2}.
\end{remark}

\begin{remark}\rm
When $f$ is a probability density with zero mean and unit variance,
assumption \eqref{it:1} is a necessary and sufficient condition for the
Local Central Limit Theorem, that is, for
$\lim_{n\to\infty}
\sup_{x\in\R} |\sqrt{n} f_n(\sqrt{n} x) - \frac{1}{\sqrt{2\pi}} e^{-x^2/2}| = 0$,
cf. \cite[Theorem~2]{cf:Fel2} and the following lines.
\end{remark}


Let us now discuss assumption~\eqref{it:2}.
This is \emph{not} necessary for Theorem~\ref{th:main} to hold, as the following
example shows: the function $f(x) := \frac{1}{x \, (\log x)^2} \ind_{[e,\infty)}(x)$ is
in $L^1$, it does not satisfy assumption~\eqref{it:2}, for any $\epsilon > 0$,
but $f$ is d.R.i.. Only the latter statement requires a proof.
Being Riemann integrable on every compact interval
(it is continuous except at the point $e$), it suffices to verify
that an upper Riemann sum of $f$ is finite, by Lemma~\ref{th:reformulation} below.
Since
$f(x) = 0$ for $x < e$ and $f$ is \emph{decreasing} on $[e,\infty)$, we have
\begin{equation*}
	\sum_{m\in\Z} \, \sup_{z \in [m,m+1)} f(z)
	\,=\, \sum_{m\in\N, m \ge 2} \, \sup_{z \in [m,m+1)} f(z)
	\,\le\, \frac{2}{e} \,+\, \int_3^\infty f(z) \, \dd z \,<\, \infty \,,
\end{equation*}
where $\frac{2}{e}$ accounts for the terms $m=2,3$ of the sum,
while for $m \ge 4$ we have used the estimate $\sup_{z \in [m,m+1)} f(z)
\le  \int_{m-1}^m f(z) \, \dd z$, by monotonicity of $f$.

\begin{remark}\rm
The same argument shows that if some convolution $f_k$
is bounded, continuous and dominated in absolute value by $g \in L^1$,
with $g$ non-increasing in a neighborhood of infinity, then $f_k$
is d.R.i.. Such a condition, however,
seems difficult to check in terms of $f$.
\end{remark}

\begin{remark}\rm
Although not necessary, assumption~\eqref{it:2} is very mild
and easily satisfied in most situations. For instance,
when $f$ is the probability density of a random variable $X$,
we can write, for every $\epsilon > 0$,
\begin{equation} \label{eq:RVX}
	\int_\R |x|^\epsilon f(x) \, \dd x \,=\,
	\e(|X|^\epsilon) \,=\, \int_0^\infty \p(|X|^\epsilon > s) \, \dd s
	\,=\, \epsilon \, \int_0^\infty \frac{\p(|X| > t)}{t^{1-\epsilon}} \, \dd t \,.
\end{equation}
It follows, in particular, that \emph{assumption~\eqref{it:2} always holds
for the density $f$ of a random variable $X$
in the domain of attraction of a stable law, of any index $\alpha \in (0,2]$},
because it is well-known that in this case $\e(|X|^\epsilon) < \infty$
for every $0 < \epsilon < \alpha$.
\end{remark}

\begin{remark}\rm
We don't know whether assumption \eqref{it:2}
can be completely eliminated from Theorem~\ref{th:main}.
In other terms, we are not aware of examples of functions $f \in L^1$ satisfying assumption
\eqref{it:1} --- and necessarily \emph{not} satisfying assumption
\eqref{it:2} --- such that no convolution $f_k$ is d.R.i., for any $k\in\N$.
We point out that, if they exist, 
such counterexamples can be found in the class of bounded and continuous functions
that vanish at infinity, because assumption \eqref{it:1} entails that $f_{k_0 + 1}$
has these properties, by Lemma~\ref{th:tohol} below.
\end{remark}



%
%
%
%

\section{Applications to renewal theory}
\label{sec:apprenth}

If $\{X_n\}_{n\in\N}$ are independent, identically distributed 
\emph{non-negative} random variables, the associated random walk
started at zero, that is $S_0 := 0$ and $S_n := X_1 + \ldots + X_n$
for every $n\in\N$, is called (undelayed) \emph{renewal process}.
The corresponding renewal measure $U(\cdot)$ is the $\sigma$-finite Borel measure
on $[0,\infty)$ defined by
\begin{equation} \label{eq:U}
	U(A) \,:=\, \e\big(\#\{n \in \N_0: \ S_n \in A\}\big) \,=\,
	\sum_{n\in\N_0} \p(S_n \in A) \,, \quad \text{for Borel }
	A \subseteq [0,\infty) \,.
\end{equation}

When $\mu := \e(X_1) \in (0,\infty)$ and the law of $X_1$
is non-lattice, Blackwell's renewal theorem states that,
for every fixed $\delta > 0$,
\begin{equation} \label{eq:Blackwell}
	\lim_{x\to+\infty} U\big([x,x+\delta)\big) \,=\, \frac{\delta}{\mu} \,.
\end{equation}
This means that, roughly speaking, the measure $U(\cdot)$ is
asymptotically close to $\frac{1}{\mu}$ times the Lebesgue measure. It is therefore
natural to conjecture that, for suitable $g: [0,\infty) \to \R$,
\begin{equation} \label{eq:Key}
	\lim_{x\to+\infty} \int_{[0,\infty)} g(x-z) \, U(\dd z) \,=\, \frac{1}{\mu}
	\Bigg( \int_{[0,\infty)} g(z) \, \dd z \Bigg) \,.
\end{equation}
This relation indeed holds \emph{whenever $g$ is d.R.i.}
(but can fail for general $g \in L^1$) and is known as the \emph{key renewal theorem}.
This is how d.R.i. functions appear in renewal theory.

Note that taking
$g = \ind_{(0,\delta]}$ one recovers Blackwell's renewal theorem \eqref{eq:Blackwell}.
We point out that relations \eqref{eq:Blackwell}, \eqref{eq:Key} hold
also when $\mu = +\infty$, the right hand side being interpreted as zero,
but of course they give much less information (we come back on this point below).
We refer to \cite[\S V]{cf:Asm} for more details on renewal theory.

%

\subsection{On the renewal density theorem}

Consider now the case when the law of $X_1$ is absolutely continuous,
with density $f$, and always assume that $\mu = \e(X_1) \in (0,\infty)$. Then
\begin{equation} \label{eq:Uabscont}
	U(\dd x) \,=\, \delta_0(\dd x) \,+\, \Bigg( \sum_{n=1}^\infty f_n(x) \Bigg) \dd x
	\,=:\, \delta_0(\dd x) \,+\, u(x) \, \dd x \,,
\end{equation}
where we recall that $f_n = f^{*n}$ denotes the $n$-fold convolution of $f$
with itself, cf. \eqref{eq:conv}. 
Therefore, excluding the Dirac mass at zero due to $S_0$,
the renewal measure $U(\cdot)$ is absolutely continuous, with density $u(x)$.
It is tempting to deduce from \eqref{eq:Blackwell}
the corresponding relation for the density, namely
\begin{equation}\label{eq:Density}
	\lim_{x\to+\infty} u(x) \,=\, \frac{1}{\mu} \,,
\end{equation}
sometimes called \emph{renewal density theorem}.
However, additional conditions on $f$ are needed for \eqref{eq:Density}
to hold: for instance,
\emph{it is necessary that $\lim_{x\to+\infty} f(x) = 0$}, as proved
by Smith~\cite[\S 4]{cf:SmithNS} (generalizing an earlier result by
Feller~\cite{cf:FelOld}). This is rather intuitive, because
any fixed term in the sum \eqref{eq:U} gives no
contribution to the asymptotic behavior of $U([x,x+\delta))$ ---
since $\lim_{x\to+\infty}\p(S_n \in [x,x+\delta)) = 0$ for every fixed $n\in\N$ ---
while this is not the case for $u(x)$ 
if the density $f$ does not vanish at infinity, cf.~\eqref{eq:Uabscont}.

Sharp necessary and sufficient conditions on $f$ for the validity
of the renewal density theorem \eqref{eq:Density} are known
\cite{cf:SmithNS}, but they are quite involved and implicit. A natural
sufficient condition \cite{cf:SmithOld1,cf:SmithOld2}
is simply that $f \in L^p$ for some $p \in (1,\infty]$
(in addition to $\mu = \int_\R x \, f(x) \, \dd x
\in (0,\infty)$ and $\lim_{x\to+\infty} f(x) = 0$).
It is worth noting that the sufficiency of these conditions is an immediate
corollary of our Theorem~\ref{th:main}: in fact, if $\mu < \infty$, 
assumption \eqref{it:2} is satisfied with $\epsilon = 1$, and if
$f \in L^p$ with $p > 1$,
assumption \eqref{it:1} is also satisfied, by Lemma~\ref{th:suf}; 
it follows that $f_k$ is d.R.i. for some $k\in\N$, and by
the key renewal theorem \eqref{eq:Key} we have
\begin{equation} \label{eq:appldRi}
	\lim_{x\to+\infty} \int_\R f_k(x-z) \, U(\dd z) \,=\, 
	\lim_{x\to+\infty} \Bigg( \sum_{n = k}^\infty f_n(x) \Bigg) \,=\, 
	\frac{1}{\mu}
	\int_\R f_k(z) \, \dd z \,=\, \frac{1}{\mu} \,,
\end{equation}
where in the second equality we have used \eqref{eq:Uabscont} and the fact
that $f_{i} * f_{j} = f_{i+j}$. We can rewrite this relation as
\begin{equation} \label{eq:u-f}
	\lim_{x\to+\infty} \Bigg( u(x) \,-\, \sum_{n = 1}^{k-1} f_n(x) \Bigg) \,=\, 
	\frac{1}{\mu} \,.
\end{equation}
It follows easily by \eqref{eq:conv} that
\begin{equation}\label{eq:usefu}
	f_{n+1}(x) \,\le\, \sup_{y \ge x/2} \big( f(y) + f_n(y) \big) \,, \qquad
	\forall x \in [0,\infty), \ \forall n \in \N \,.
\end{equation}
If $\lim_{x\to+\infty} f(x) = 0$,
relation \eqref{eq:usefu} shows by induction
that also $\lim_{x\to+\infty} f_n(x) = 0$, for every fixed $n\in\N$,
hence relation \eqref{eq:Density} follows from \eqref{eq:u-f}.

\begin{remark}\rm
The idea of deriving the renewal density theorem
\eqref{eq:Density} from the key renewal theorem \eqref{eq:Key} is a
classical one, dating back at least to Feller, who proved the validity
of \eqref{eq:u-f} with $k = 2$ when $f \in L^\infty$, 
showing that $f_2$ is d.R.i., cf. Theorem~2a in~\cite[\S XV.3]{cf:Fel2}. 
Feller's proof is based on the simple observation that, by \eqref{eq:conv}
and a symmetry argument,
\begin{equation} \label{eq:feller}
	f_2(x) \,=\, 2 \int_{z > x/2} f(z) \, f(x-z) \, \dd z \,\le\,
	2 \, \|f\|_\infty \, \int_{z > x/2} f(z) \,=\, 
	2 \, \|f\|_\infty \, \p\big( X_1 > \tfrac{x}{2} \big) \,.
\end{equation} 
Since by assumption
\begin{equation*}
	\int_0^\infty \p\big( X_1 > \tfrac{x}{2} \big) \, \dd x \,=\, 2 \e(X_1)
	\,\in\, (0,\infty) \,,
\end{equation*}
the function $f_2$ is dominated
by a non-increasing, integrable function, hence it is d.R.i..

We point out that the generalization that we sketched above,
in which the assumption $f \in L^\infty$ is relaxed to $f \in L^p$ for some $p > 1$, 
is a rather elementary upgrade: since
a convolution $f_k$ of $f$ is bounded, thanks to Lemma~\ref{th:suf},
applying relation \eqref{eq:feller} to $f_k$ yields immediately that $f_{2k}$ is d.R.i.,
allowing to deduce \eqref{eq:u-f} (with $k$ replaced by $2k$) from
the key renewal theorem \eqref{eq:Key}. Let us stress, however, that to deduce
direct Riemann integrability from \eqref{eq:feller}, the
\emph{finiteness of the mean $\e(X_1)$} is essential.
\end{remark}

\subsection{The heavy-tailed case}

The novelty of Theorem~\ref{th:main} is that assumption \eqref{it:2} only requires
the finiteness of an arbitrarily small moment,
allowing in particular to deal with cases when the mean is infinite.
This is especially interesting from the viewpoint of heavy-tailed renewal theory.
More precisely, keeping the notation of the beginning of this section,
assume that the law of $X_1$ is non-lattice and satisfies the following relation,
for some $\alpha \in (0,1]$:
\begin{equation} \label{eq:tailX1}
	\p(X_1 > x) \,\sim\, \frac{L(x)}{x^\alpha} \qquad
	\text{as } x \to +\infty \,,
\end{equation}
where $L(\cdot)$ is a slowly varying function \cite{cf:BinGolTeu}.
For $\alpha < 1$, relation \eqref{eq:tailX1} is the same as requiring that
$X_1$ is in the domain of attraction of the (unique up to multiples)
positive stable law of index $\alpha$ (cf. Theorem~8.3.1
in~\cite{cf:BinGolTeu}), while for $\alpha = 1$
relation \eqref{eq:tailX1} implies that $X_1$ is relatively stable
(cf. Theorem~8.8.1, Corollary~8.1.7 and the following lines in~\cite{cf:BinGolTeu}).

When $\mu = \e(X_1) = +\infty$ (in particular, for every $\alpha < 1$),
Blackwell's renewal theorem \eqref{eq:Blackwell}
only says that $\lim_{x\to+\infty} U([x,x+\delta)) = 0$. Sharpenings
of this relation have been proved by Erickson~\cite[Theorems 1--4]{cf:Eri},
extending the corresponding results for lattice distributions 
obtained by Garsia and Lamperti~\cite{cf:GarLam}.
Introducing the truncated mean function
\begin{equation} \label{eq:intmean}
	m(x) \,:=\, \int_0^x \p(X_1 > y) \, \dd y \,=\,
	x \, \p(X_1 > x) \,+\, \e(X_1 \ind_{\{X_1 < x\}}) \,, \qquad \forall x \ge 0 \,,
\end{equation}
it follows from \eqref{eq:tailX1} that $m(x) \sim \frac{1}{1-\alpha} L(x) x^{1-\alpha}$
as $x\to+\infty$. The generalized version of Blackwell's renewal theorem then
reads as follows: for every fixed $\delta > 0$
\begin{equation} \label{eq:Blackwellalpha}
	\liminf_{x \to +\infty} \, m(x) \, U\big([x,x+\delta) \big) \,=\,
	\frac{1}{\Gamma(\alpha) \Gamma(2-\alpha)} \, \delta \,,
\end{equation}
and when $\alpha \in (\frac{1}{2}, 1]$ the $\liminf$ in this relation
can be upgraded to a true limit.
A generalized version of the key renewal theorem is also available: for every
d.R.i. function $g: [0,\infty) \to \R$
\begin{equation} \label{eq:Keyalpha}
	\liminf_{x \to +\infty} \, m(x) \, \int_{[0,\infty)} g(x-z) \, U(\dd z) \,=\,
	\frac{1}{\Gamma(\alpha) \Gamma(2-\alpha)} \,
	\Bigg( \int_{[0,\infty)} g(y) \, \dd y \Bigg) \,.
\end{equation}
Furthermore, when $\alpha \in (\frac{1}{2}, 1]$ and
$g(x) = O(1/x)$ as $x \to +\infty$, also in this relation the $\liminf$
can be upgraded to a true limit. The reason for the presence
of $\liminf$ instead of $\lim$ is discussed in Remark~\ref{rem:liminflim} below.
Apart from that, note that relations \eqref{eq:Blackwellalpha},
\eqref{eq:Keyalpha} match perfectly with \eqref{eq:Blackwell}, \eqref{eq:Key},
because $\mu = \e(X_1) = m(\infty)$.

Assume now that $X_1$ is absolutely continuous, with a density $f$.
The density $u$ of the renewal measure $U$ is always defined by \eqref{eq:Uabscont},
and it is natural to ask whether the density version of \eqref{eq:Blackwellalpha}
holds true. As a corollary of Theorem~\ref{th:main}, we obtain the following
result, which seems to be new.

\begin{proposition} \label{th:cor}
Let $X_1$ be a non-negative random variable satisfying \eqref{eq:tailX1},
for $\alpha \in (0,1]$ and $L(\cdot)$ slowly varying.
Assume that the law of $X_1$ is absolutely continuous, with a density
$f$ such that $f_k \in L^\infty$ for some $k\in\N$ (cf. Lemmas~\ref{th:necsuf}
and~\ref{th:suf}). Then, recalling the definitions \eqref{eq:Uabscont} of $u(\cdot)$
and \eqref{eq:intmean} of $m(\cdot)$, there exists $\overline{k} \in \N$ such that
\begin{equation} \label{eq:new1}
	\liminf_{x \to +\infty} \, m(x) \, \Bigg( u(x) \,-\, 
	\sum_{n = 1}^{\overline{k}-1} f_n(x) \Bigg) \,=\,
	\frac{1}{\Gamma(\alpha) \Gamma(2-\alpha)} \,.
\end{equation}
If furthermore $\lim_{x\to+\infty} f(x) = 0$, then 
\begin{equation} \label{eq:new2}
	\liminf_{x \to +\infty} \, m(x) \, u(x) \,=\,
	\frac{1}{\Gamma(\alpha) \Gamma(2-\alpha)} \,.
\end{equation}
Finally, if $\alpha \in (\frac{1}{2}, 1]$ and $f_k(x) = O(1/x)$
as $x\to+\infty$, for some $k\in\N$,
relations \eqref{eq:new1} and \eqref{eq:new2} hold with $\lim$ instead of $\liminf$.
\end{proposition}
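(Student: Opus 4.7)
The plan is to apply Theorem~\ref{th:main} to produce a convolution $f_{\overline{k}}$ that is directly Riemann integrable, and then to plug $g := f_{\overline{k}}$ into the heavy-tailed key renewal theorem \eqref{eq:Keyalpha}, following exactly the pattern of \eqref{eq:appldRi}. First I would verify the two hypotheses of Theorem~\ref{th:main}: assumption~\eqref{it:1} holds by hypothesis, while for assumption~\eqref{it:2}, combining \eqref{eq:RVX} with \eqref{eq:tailX1} gives $\int_\R |x|^\epsilon f(x) \, \dd x < \infty$ for every $\epsilon \in (0, \alpha)$. Theorem~\ref{th:main} therefore provides some $\overline{k} \in \N$ such that $f_k$ is bounded, continuous and d.R.i.\ for every $k \ge \overline{k}$.

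To prove \eqref{eq:new1}, I would apply \eqref{eq:Keyalpha} to $g = f_{\overline{k}}$. Decomposing $U$ via \eqref{eq:Uabscont} and using $f_i * f_j = f_{i+j}$ as in \eqref{eq:appldRi}, one obtains
\begin{equation*}
	\int_{[0,\infty)} f_{\overline{k}}(x - z) \, U(\dd z) \,=\, f_{\overline{k}}(x) + \sum_{n=1}^\infty f_{\overline{k}+n}(x) \,=\, \sum_{n=\overline{k}}^\infty f_n(x) \,=\, u(x) - \sum_{n=1}^{\overline{k}-1} f_n(x) \,,
\end{equation*}
and since $\int f_{\overline{k}}(z) \, \dd z = 1$, relation \eqref{eq:new1} follows directly from \eqref{eq:Keyalpha}.

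For \eqref{eq:new2}, the lower bound $\liminf_{x\to\infty} m(x) u(x) \ge 1/(\Gamma(\alpha)\Gamma(2-\alpha))$ is immediate from \eqref{eq:new1} and non-negativity of the $f_n$'s; the matching upper bound is the main difficulty, and here my plan is to invoke the heavy-tailed Blackwell relation \eqref{eq:Blackwellalpha}. Since $m$ is regularly varying of index $1-\alpha$, the ratio $m(y)/m(x)$ tends to $1$ uniformly for $y \in [x,x+\delta]$ as $x\to\infty$, hence $m(x) U([x, x+\delta)) = (1 + o(1)) \int_x^{x+\delta} m(y) u(y) \, \dd y$. Setting $L := \liminf_{x\to\infty} m(x) u(x)$ (which is automatically finite by comparison with \eqref{eq:Blackwellalpha}), the very definition of $\liminf$ gives $m(y) u(y) \ge L - \epsilon$ for $y$ sufficiently large, so the integral above is at least $\delta(L - \epsilon)$; passing to $\liminf$ in $x$ and comparing with \eqref{eq:Blackwellalpha} yields $L \le 1/(\Gamma(\alpha)\Gamma(2-\alpha))$. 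The assumption $f(x) \to 0$ enters through \eqref{eq:usefu}, which by induction gives $f_n(x) \to 0$ for each $n$, ensuring that $u(x) = \sum_n f_n(x)$ is amenable to the pointwise $\liminf$ analysis above (otherwise the small-$n$ correction terms could a priori create spurious spikes). Finally, the $\lim$ refinement under $\alpha \in (\tfrac{1}{2}, 1]$ and $f_k(x) = O(1/x)$ proceeds by the same scheme, with \eqref{eq:Keyalpha} and \eqref{eq:Blackwellalpha} replaced by their upgraded ($\lim$) versions; the hard part I anticipate here is arranging that the $O(1/x)$ hypothesis required to upgrade \eqref{eq:Keyalpha} is satisfied by $g = f_{\overline{k}}$, which should follow by combining the given rate $f_k = O(1/x)$ with the decay $f_{k'}(x) = O(|x|^{-\epsilon})$ from Theorem~\ref{th:main} (taking $\epsilon \in (1-\alpha, \alpha)$, possible since $\alpha > \tfrac{1}{2}$) and by propagating the $O(1/x)$ property along $f_{k+1} = f_k * f$ via $f_k \in L^\infty$ and the tail bound $\p(X_1 > x/2) = O(x^{-\alpha})$ from \eqref{eq:tailX1}.
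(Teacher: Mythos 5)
Your treatment of \eqref{eq:new1} coincides with the paper's: verify assumption~\eqref{it:2} of Theorem~\ref{th:main} from \eqref{eq:tailX1} via \eqref{eq:RVX}, obtain $k_1$ with $f_k$ d.R.i.\ for $k\ge k_1$, and feed $g=f_{\overline{k}}$ into \eqref{eq:Keyalpha} exactly as in \eqref{eq:appldRi}. For \eqref{eq:new2} you take a genuinely different route. The paper deduces \eqref{eq:new2} from \eqref{eq:new1} in one line, by noting via \eqref{eq:usefu} that the correction terms $f_1,\dots,f_{\overline{k}-1}$ vanish at infinity. You instead prove the lower bound from \eqref{eq:new1} and non-negativity of the $f_n$, and the upper bound by comparison with Blackwell's relation \eqref{eq:Blackwellalpha}, using that $m$ is non-decreasing and $m(x+\delta)/m(x)\to 1$, so that $m(x)\,U([x,x+\delta))=(1+o(1))\int_x^{x+\delta}m(y)u(y)\,\dd y\ge(1+o(1))\,\delta(L-\epsilon)$ with $L:=\liminf m(x)u(x)$. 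This is correct, and it buys something: your upper bound never uses $f(x)\to 0$ (spikes of $u$ only affect the $\limsup$), and it sidesteps the issue of controlling $m(x)\sum_{n<\overline{k}}f_n(x)$, which is delicate when $\alpha<1$ since then $m(x)\to\infty$ and the mere convergence $f_n(x)\to 0$ does not make that product small.

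The genuine gap is in the final part. To invoke the upgraded ($\lim$) version of \eqref{eq:Keyalpha} you must produce a \emph{single} index $\overline{k}\ge k_1$ for which $f_{\overline{k}}$ is both d.R.i.\ and $O(1/x)$, and your proposed propagation of the $O(1/x)$ bound does not achieve this. Splitting the convolution $f_{k+1}=f_k*f$ at $x/2$ gives $f_{k+1}(x)\le\sup_{z>x/2}f_k(z)+\|f_k\|_\infty\,\p(X_1>x/2)=O(1/x)+O(L(x)x^{-\alpha})$, which for $\alpha<1$ is only $O(L(x)x^{-\alpha})$; likewise, pairing $f_k=O(1/x)$ with the decay $f_{k'}(x)=O(|x|^{-\epsilon})$ supplied by Theorem~\ref{th:main} (necessarily $\epsilon<\alpha\le 1$, by \eqref{eq:RVX}) yields at best $O(x^{-\epsilon})$. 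In every such combination the second convolution factor decays too slowly, so the $O(1/x)$ rate is destroyed after one step and never recovered. The fix is to iterate only along powers of $f_k$: applying \eqref{eq:usefu} to the renewal process whose step density is $f_k$ gives $f_{(n+1)k}(x)\le\sup_{y\ge x/2}\big(f_k(y)+f_{nk}(y)\big)$, and since here \emph{both} factors are $O(1/y)$, induction yields $f_{nk}(x)=O(1/x)$ for every $n$; choosing $n$ so large that $\overline{k}:=nk\ge k_1$ then makes $f_{\overline{k}}$ simultaneously d.R.i.\ and $O(1/x)$, which is what the paper does.
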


\begin{proof}
The density $f$ satisfies the assumptions of Theorem~\ref{th:main}, hence
there exists $k_1 \in \N$ such that
$f_{k}$ is d.R.i. for every $k\ge k_1$. Choosing $\overline{k} = k_1$
and applying \eqref{eq:Keyalpha}
with $g = f_{\overline{k}}$, we obtain immediately \eqref{eq:new1}.
We have already remarked that, by \eqref{eq:usefu}, 
if $\lim_{x\to+\infty} f(x) = 0$,
then also $\lim_{x\to+\infty} f_n(x) = 0$, for every fixed $n\in\N$,
hence relation \eqref{eq:new2} follows from \eqref{eq:new1}.
Finally, if $\alpha \in (\frac{1}{2}, 1]$
and $f_k(x) = O(1/x)$ for some $k \in \N$, then also
$f_{nk}(x) = O(1/x)$ for every $n\in\N$, as it follows 
by induction from \eqref{eq:usefu} applied to $f_k$. If $n$ is large
enough so that $\overline{k} := nk \ge k_1$, the function $f_{\overline{k}}(x)$
is both d.R.i. and $O(1/x)$, hence relation \eqref{eq:Keyalpha} holds with
$\lim$ instead of $\liminf$ and consequently the same is true for
\eqref{eq:new1}, \eqref{eq:new2}.
\end{proof}

\begin{remark}\rm
It is not clear whether the additional condition $g(x) = O(1/x)$, for the validity
of the generalized key renewal
theorem \eqref{eq:Keyalpha} with $\lim$ instead of $\liminf$
when $\alpha \in (\frac{1}{2}, 1]$, 
formulated in \cite[Theorem~3]{cf:Eri}, is substantial or just technical.
In any case, if that condition can be removed or relaxed, the same
applies immediately to Proposition~\ref{th:cor}.
\end{remark}

\begin{remark}\rm\label{rem:liminflim}
The presence of $\liminf$ instead of $\lim$, in relations \eqref{eq:Blackwellalpha}
and \eqref{eq:Keyalpha} when $\alpha \in (0,\frac{1}{2}]$ but not when
$\alpha \in (\frac{1}{2}, 1]$, may appear strange, but can be explained
as follows. By \eqref{eq:tailX1},
$\p(X_1 \in [x,x+\delta)) \le \p(X_1 \ge x) \sim L(x)/x^{\alpha}$ as $x\to\infty$,
and a similar estimate (up to a constant)
holds for $\p(S_n \in [x,x+\delta))$, for every $n\in\N$.
Since $m(x) \sim \frac{1}{1-\alpha} L(x) x^{1-\alpha}$
as $x\to+\infty$, when $\alpha < \frac{1}{2}$ we have
$\p(S_n \in [x,x+\delta)) \ll 1 / m(x)$.
Recalling \eqref{eq:Blackwellalpha}, this means that any
term $\p(S_n \in [x,x+\delta))$,
for fixed $n\in\N$, gives a negligible contribution to the asymptotic
behavior of $U([x,x+\delta))$. This is no longer true when $\alpha > \frac{1}{2}$,
as one can build examples of laws of $X_1$ satisfying \eqref{eq:tailX1}
but such that $\p(X_1 \in [x,x+\delta))$ is
anomalously close to $\p(X_1 \ge x) \sim L(x)/x^{\alpha}$, for
(rare but) arbitrarily large values of $x$; in this way,
the contribution of the single term $\p(X_1 \in [x,x+\delta))$ can be made much larger
than the ``typical'' behavior of $U([x,x+\delta))$,
that is $1/m(x)$, by \eqref{eq:Blackwellalpha}.
For more details, we refer to \cite{cf:GarLam}, \cite{cf:Wil}.

In view of these considerations, it is natural to conjecture that,
under some additional regularity assumptions on the
distribution of $X_1$,
it should be possible to upgrade
the $\liminf$ in the generalized Blackwell's renewal theorem \eqref{eq:Blackwellalpha},
or in its density form \eqref{eq:new2},
to a true $\lim$ also for $\alpha \in (0,\frac{1}{2}]$. 
This is indeed the case, as shown recently
by Topchii in~\cite[Theorem~8.3]{cf:Top}
(generalizing the analogous results for lattice distributions
by Doney~\cite[Theorem~B]{cf:Don97}). More precisely, assume that $X_1$
satisfies \eqref{eq:tailX1}; that it has an absolutely continuous
law with density $f$, such that $f_k \in L^\infty$ for some $k\in\N$
(that is, assumption~\eqref{it:1} in Theorem~\ref{th:main}); and furthermore
that there exist positive constants $C, x_0$ such that
$f(x) \le C L(x) / x^{1+\alpha}$ for all $x \ge x_0$, where $L(\cdot)$ is the same
slowly varying function appearing in \eqref{eq:tailX1};
then the generalized renewal density theorem \eqref{eq:new2},
and consequently also the generalized Blackwell's theorem \eqref{eq:Blackwellalpha},
holds with $\lim$ instead of $\liminf$.
\end{remark}

\subsection{Application to local limit theorems}
\label{sec:appllt}

Beyond renewal theory, Theorem~\ref{th:main} can be used
to derive local limit theorems \emph{for the density} of a random walk,
even under conditioning,
when the corresponding local limit theorems \emph{\`a la Stone} are available.

For instance, let $\{X_n\}_{n\in\N}$ be
independent, identically distributed real random variables,
in the domain of attraction of a stable law,
and denote by $(S = \{S_n\}_{n\in\N}, \p_x)$ the associated
random walk started at $x\in\R$, that is $\p_x(S_0 = x) = 1$ and
$S_n := S_{n-1} + X_n$ for all $n\in\N$. Let us
also set $C_n := [0,\infty)^n \subseteq \R^n$.
When the law of $X_1$ is non-lattice,
local limit theorems
in the Stone form --- that is, for the probabilities of small intervals ---
for the law of $S_n$ on the event $\{(S_1, \ldots, S_n) \in C_n\}$
are available, cf. \cite{cf:VatWac,cf:Don11}. For example, there exist
diverging sequences $(a_n)_{n\in\N}$, $(b_n)_{n\in\N}$
and real functions $\phi, \psi$ such that, for any \emph{fixed} $\delta > 0$,
as $n\to\infty$
\begin{equation} \label{eq:Stone}
	\p_x \big(S_n \in [y,y+\delta) \,,\, (S_1, \ldots, S_n) \in C_n \big) \,=\,
	\delta \, \frac{1}{b_n} \, \psi(x) \,
	\phi\bigg(\frac{y}{a_n}\bigg) \, \big( 1 + o(1) \big) \,,
\end{equation}
uniformly when $x/a_n \to 0$ and $y/a_n$ is bounded away both from $0$ and $\infty$.

Assume now that the law of $X_1$ is absolutely continuous, with density $f$,
and denote by $f_n^+$ the density of $S_n$ on the event $\{(S_1, \ldots, S_n) \in C_n\}$, i.e.
\begin{equation*}
	f_n^+(x,y) \,:=\, 
	\frac{\p_x \big(S_n \in \dd y \,,\, (S_1, \ldots, S_n) \in C_n \big)}{\dd y} \,.
\end{equation*}
It is then very natural to conjecture that the density version of \eqref{eq:Stone} holds, namely
\begin{equation} \label{eq:StoneDen}
	f_n^+(x,y) \,=\, \frac{1}{b_n} \, \psi(x) \,
	\phi\bigg(\frac{y}{a_n}\bigg) \, \big( 1 + o(1) \big) \,,
\end{equation}
but some care is needed in order to interchange the limits $\delta \downarrow 0$
and $n\to\infty$. In fact, \eqref{eq:StoneDen}
is not true in general, as the density of $S_n$,
and hence $f_n^+(x,y)$, might be unbounded for every $n\in\N$. However,
this turns out to be the only possible pathology.

If we assume that the density of $S_n$ is bounded
for some $n\in\N$, then Theorem~\ref{th:main} may be applied
(note that assumption~\eqref{it:2} is automatically satisfied, since
we assume that $X_1$ is in the domain of attraction of a stable law,
as we already remarked).
It follows that the density $f_k$ of $S_k$, 
and hence $z \mapsto f_k^+(z,y)$, is d.R.i. for some $k\in\N$,
and this allows to rigorously derive \eqref{eq:StoneDen} from \eqref{eq:Stone}.
We refer to \cite[\S 5]{cf:CarCha2} for the technical details,
but let us sketch the main idea, which is quite simple. For fixed $k \in \N$
and $n \ge k$ we can write
\begin{equation} \label{eq:decompint}
	f_n^+(x,y) \,=\, \int_{[0,\infty)} \dd z \, f_{n-k}^+(x,z) \, f_k^+(z,y) \,.
\end{equation}
Since $x \mapsto f_k^+(x,y)$ is d.R.i., we can effectively approximate it
with a step function, piecewise constant over disjoint intervals.
The integral in the right hand side of \eqref{eq:decompint} then becomes a sum
of terms, each of which is like the left hand side of \eqref{eq:Stone}, with
$n-k$ instead of $n$. Since $k$ is fixed, as $n\to\infty$ relation \eqref{eq:Stone}
holds and \eqref{eq:StoneDen} can be recovered from \eqref{eq:decompint}.

This approximation method is quite general, and may in principle be applied to
other contexts (e.g., for other choices of the conditioning subsets $C_n \subseteq \R^n$).
The message is that, whenever a local limit theorem in the Stone form is available,
Theorem~\ref{th:main} provides a helpful tool in deriving the corresponding
local limit theorem for the density.

\section{Proof of Theorem~\ref{th:main}}
\label{sec:proof}

Let us first discuss the strategy of the proof.
The starting point is Feller's observation \eqref{eq:feller}, 
which shows that $f_2$ can be bounded from above by a non-increasing function, namely,
the integrated tail of $f$. 
When $f$ has finite mean (that is, assumption \eqref{it:2} holds with
$\epsilon =1$), it follows that $f_2$ is d.R.i., but when the mean is infinite
this bound is not enough.

The natural idea is then to bootstrap the estimate \eqref{eq:feller}, 
deducing an estimate on $f_4$ from the bound on $f_2$, and so on,
hoping that convolutions are regularizing enough so that for some $n\in\N$
the bound obtained for $f_{2^n}$ yields the direct Riemann integrability.
This turns out to be the case, though in a highly non straightforward way.

For convenience, we organize the proof in four steps.

\subsection{Some preliminary results}

Let us give a name to the (translated) upper and lower Riemann sums of
a function $g : \R \to \R$: for $\delta \in (0,\infty)$ and $x\in\R$ we set
\begin{equation} \label{eq:SsR}
	S^g_\delta(x) \,:=\,
	\sum_{m\in\Z} \delta \, \bigg(  \sup_{z \in [m\delta, (m+1)\delta)} g(z-x) \bigg) \,,
	\quad
	s^g_\delta(x) \,:=\,
	\sum_{m\in\Z} \delta \, \bigg(  \inf_{z \in [m\delta, (m+1)\delta)} g(z-x) \bigg) \,.
\end{equation}
Note that both $S^g_\delta(x)$ and $s^g_\delta(x)$ are $\delta$-periodic functions of $x$.
Moreover, when $g$ is non-negative, the following inequality holds,
as we prove in \S\ref{sec:proofbasto}:
\begin{equation} \label{eq:basto}
	S^g_{\delta}(x) \,\le\, \bigg( 1 + 2 \frac{\delta}{\delta'} \bigg) S^g_{\delta'}(x') \,,
	\qquad \forall x,x' \in \R\,, \ \forall \delta, \delta' > 0 \,.
\end{equation}
This shows in particular that the finiteness of $S^g_{\delta}(x)$ does not depend on $\delta, x$.

Recall that, by equation \eqref{eq:dRi}, a non-negative function
$g$ is d.R.i. if and only if $S^g_\delta(0)$ and $s^g_\delta(0)$
converge to the same finite limit as $\delta \downarrow 0$.
It is an easy exercise to prove
the following lemma, which provides a useful reformulation of the d.R.i. condition.


\begin{lemma}\label{th:reformulation}
A function $g: \R \to \R$ is d.R.i. if and only if
it is Riemann integrable
on every compact interval $[a,b] \subseteq \R$,
and if in addition the upper Riemann sum
$S_\delta^{|g|}(x)$ of $|g|$ is finite for some 
(hence all) $x \in \R$ and $\delta > 0$.
In particular, every non-negative, continuous function with a finite
upper Riemann sum is d.R.i..
\end{lemma}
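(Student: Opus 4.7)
The plan is to reduce to the non-negative case via the decomposition $g = g^+ - g^-$; by the paper's convention, $g$ is d.R.i.\ if and only if both $g^+$ and $g^-$ are. Since $\sup_I |g| \le \sup_I g^+ + \sup_I g^-$ on any interval $I$, finiteness of $S^{|g|}_\delta(x)$ is equivalent to finiteness of both $S^{g^+}_\delta(x)$ and $S^{g^-}_\delta(x)$, and Riemann integrability of $g$ on $[a,b]$ is equivalent to Riemann integrability of $g^+$ and $g^-$ there. Thus the whole claim reduces to: for a non-negative function $h$, $h$ is d.R.i.\ iff $h$ is Riemann integrable on every compact interval and $S^h_{\delta}(x) < \infty$ for some (hence, by \eqref{eq:basto}, every) $\delta > 0$ and $x \in \R$.

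For the forward direction, if $h$ is d.R.i.\ then by \eqref{eq:dRi} the limit of $S^h_\delta(0)$ is finite, so certainly $S^h_\delta(0) < \infty$ for some $\delta$, and for any compact interval $[a,b]$ the restricted partial sum $\sum_{m:\,[m\delta,(m+1)\delta)\subseteq[a,b]} \delta\,(\sup h - \inf h)$ is bounded by $S^h_\delta(0) - s^h_\delta(0) \to 0$, giving Riemann integrability on $[a,b]$.

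For the reverse direction I would proceed by a truncation argument. Fix $\epsilon > 0$ and a reference scale $\delta_0$ for which $S^h_{\delta_0}(0) < \infty$, then pick $N$ so large that the tail $\sum_{|m|\geq N/\delta_0} \delta_0 \sup_{[m\delta_0,(m+1)\delta_0)} h < \epsilon$. The key step is to promote this to a tail bound valid \emph{uniformly} at all finer scales $\delta \le \delta_0$: any interval $[m\delta,(m+1)\delta)$ with $|m\delta| > N + \delta_0$ is contained in the union of at most two consecutive intervals of length $\delta_0$ lying outside $[-N,N]$, so the same argument used to prove \eqref{eq:basto} bounds the tail of $S^h_\delta(0)$ outside $[-N-\delta_0,N+\delta_0]$ by $3\epsilon$, and the analogous bound for $s^h_\delta(0)$ is automatic since $0 \le \inf \le \sup$. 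On the bounded portion $[-N-\delta_0,N+\delta_0]$, Riemann integrability of $h$ gives that the difference of restricted upper and lower sums tends to $0$ as $\delta \downarrow 0$. Combining, $\limsup_{\delta \downarrow 0}(S^h_\delta(0) - s^h_\delta(0)) \le 6\epsilon$, and letting $\epsilon \downarrow 0$ proves $h$ is d.R.i.

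The ``in particular'' statement is then immediate, since every continuous function is Riemann integrable on compact intervals, so the two hypotheses reduce to finiteness of $S^h_\delta(x)$. The main technical obstacle is precisely the uniform-in-$\delta$ tail control; all other ingredients are bookkeeping around \eqref{eq:basto} and standard one-sided splittings.
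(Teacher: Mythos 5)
Your argument is correct. The paper does not actually prove this lemma --- it is stated as ``an easy exercise'' --- so there is nothing to compare against; your write-up is the standard argument one would give: reduce to $h \ge 0$ via $g = g^+ - g^-$, and combine Riemann integrability on a large compact with a uniform-in-$\delta$ tail bound obtained exactly as in the proof of \eqref{eq:basto}. The only point worth spelling out is the very last step: you show $S^h_\delta(0) - s^h_\delta(0) \to 0$, whereas the definition \eqref{eq:dRi} asks for both sums to \emph{converge} to a common finite limit; this follows by the squeeze $s^h_\delta(0) \le \int_\R h\,\dd x \le S^h_\delta(0)$ (or from \eqref{eq:basto2}, which gives $\limsup_{\delta} S^h_\delta(0) \le S^h_{\delta'}(0)$ for every fixed $\delta'$), but it should be said.
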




The following standard result will also be useful.

\begin{lemma}\label{th:tohol}
If $g \in L^1$ and $h \in L^1 \cap L^\infty$,
then $g * f$ is bounded, continuous and vanishes at infinity
(that is $\lim_{|x| \to +\infty} (g * f)(x) = 0$).
\end{lemma}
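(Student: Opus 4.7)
The plan is to establish the three properties (boundedness, continuity, vanishing at infinity) for $g*h$ in turn, exploiting the complementary use of the $L^1$ and $L^\infty$ hypotheses on $h$.

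First, boundedness is immediate from Young's inequality: one estimates
\[
|(g*h)(x)| \,\le\, \int_\R |g(y)|\,|h(x-y)|\,\dd y \,\le\, \|g\|_1\,\|h\|_\infty,
\]
so $g*h \in L^\infty$ with $\|g*h\|_\infty \le \|g\|_1 \|h\|_\infty$. For continuity I would use the $L^1$-continuity of translations: writing $\tau_t g(\cdot) := g(\cdot - t)$, for every $g \in L^1$ one has $\|\tau_t g - g\|_1 \to 0$ as $t\to 0$. Applying the boundedness estimate to the difference gives
\[
|(g*h)(x+t) - (g*h)(x)| \,\le\, \|h\|_\infty \, \|\tau_{-t} g - g\|_1 \,\to\, 0
\]
uniformly in $x$, so $g*h$ is in fact uniformly continuous on $\R$.

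The main step is vanishing at infinity, and here I would use an approximation argument that leverages $h \in L^1$. Pick a sequence $g_n \in C_c(\R)$ with $\|g - g_n\|_1 \to 0$ (possible by density of $C_c$ in $L^1$), and say $\mathrm{supp}\,g_n \subseteq [-R_n, R_n]$. For each fixed $n$, changing variables gives
\[
(g_n * h)(x) \,=\, \int_{x-R_n}^{x+R_n} g_n(x-z)\,h(z)\,\dd z,
\qquad
|(g_n * h)(x)| \,\le\, \|g_n\|_\infty \int_{x-R_n}^{x+R_n} |h(z)|\,\dd z,
\]
and the last integral tends to $0$ as $|x| \to \infty$ by absolute continuity of the Lebesgue integral (since $h \in L^1$). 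Hence $g_n * h$ vanishes at infinity. On the other hand, by the boundedness estimate,
\[
\|g*h - g_n*h\|_\infty \,\le\, \|g - g_n\|_1\, \|h\|_\infty \,\to\, 0,
\]
so $g*h$ is a uniform limit of functions that vanish at infinity, and therefore vanishes at infinity itself.

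All three steps are fairly routine; the only thing to take care of is the interplay between the two integrability hypotheses on $h$ (the $L^\infty$ part drives boundedness and continuity, the $L^1$ part drives decay at infinity). I do not anticipate any serious obstacle.
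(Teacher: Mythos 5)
Your proof is correct and follows essentially the same route as the paper's: approximate $g$ in $L^1$ by nice functions (the paper uses step functions, you use $C_c$ functions), verify the properties for the approximants, and pass to the uniform limit via the bound $\|g*h-g_n*h\|_\infty\le\|h\|_\infty\|g-g_n\|_1$. The only cosmetic differences are that you derive continuity directly from $L^1$-continuity of translations rather than from the uniform limit, and that the decay of $\int_{x-R_n}^{x+R_n}|h|$ as $|x|\to\infty$ is really the integrability of the tail of $h$ rather than ``absolute continuity'' of the integral --- a naming quibble, not a gap.
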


\begin{proof}
If $g(x) = \sum_{i=1}^n c_i \ind_{(a_i, b_i]}(x)$ is a step function, the theorem
holds by direct verification, since 
$(g*h)(x) = \sum_{i=1}^n c_i \int_{[x - b_i, x - a_i)} h(z) \, \dd z$ and $h \in L^1$.
For every $g \in L^1$, take a sequence of step functions $g_n$
such that $\|g-g_n\|_1 \to 0$. Since
$|(g*h)(x) - (g_n*h)(x)| \le \|h\|_\infty \|g-g_n\|_1$ for every $x \in \R$,
$g*h$ is the uniform limit of $g_n * h$ and the conclusion follows.
\end{proof}

\subsection{Setup}
\label{sec:prep}

If $f \in L^1(\R,Leb)$ satisfies
assumption~\eqref{it:1} of Theorem~\ref{th:main}, it follows by Lemma~\ref{th:tohol} that
that $f_k = f^{*k}$ is bounded and continuous
(and vanishes at infinity) for every $k \ge k_0 + 1$. 
By Lemma~\ref{th:reformulation},
to prove Theorem~\ref{th:main} it suffices to show that an upper Riemann sum
of $x \mapsto (1+|x|^\epsilon) |f_k(x)|$, say with mesh $1$, is finite, that is
\begin{equation} \label{eq:aimm}
	\sum_{m \in \Z} \bigg( \sup_{z \in [m, (m+1))}
	(1+|z|^{\epsilon}) \,  |f_k(z)| \bigg) \,<\, \infty \,,
\end{equation}
for all $k$ large enough. Actually, if this relation holds for
$k = \overline {k}$, one easily
shows that it holds for every $k \ge \overline{k}$, cf.~\S\ref{sec:bootaimm}.
Therefore it suffices to prove \eqref{eq:aimm} for some $k\in\N$.


\smallskip

Since $|f_k| \le |f|_k$, that is $|f^{*k}| \le |f|^{*k}$, 
without loss of generality we may assume
that the function $f$ is non-negative (it suffices to replace $f$ by $|f|$).
Moreover, excluding the trivial case when $f = 0$ almost everywhere,
in which there is nothing to prove, we may also
impose the normalization $\int_\R f(x) \, \dd x = 1$
(it suffices to multiply $f$ by a constant). In this way,
$f$ may be viewed as a probability density.
As a consequence,
also $f_k$ is a probability density:
$f_k \ge 0$ and $\int_\R f_k(x) \, \dd x  = 1$, for all $k\in\N$.
Let us set for convenience (recall assumption~\eqref{it:2})
\begin{equation*}
	C \,:=\, \int_\R |x|^\epsilon\, f(x) \, \dd x \,<\, \infty \,.
\end{equation*}
Since $|x_1 + \ldots + x_k|^\epsilon \le (k \, \max_{1 \le i \le k}|x_i|)^\epsilon
\le k^\epsilon (|x_1|^\epsilon +\ldots + |x_k|^\epsilon)$, for every $k\in\N$
\begin{equation} \label{eq:momentk}
\begin{split}
	\int_\R |x|^\epsilon \, f_k(x) \, \dd x
	& \,=\, \int_{\R^k} |x_1 + \ldots + x_k|^\epsilon \, 
	f(x_1) \cdots f(x_k) \, \dd x_1 \cdots \dd x_k 
	\,\le\, k^{\epsilon} (k C) \,<\, \infty \,.
\end{split}
\end{equation}
It follows in particular that $f_{k}$ satisfies the hypothesis of Theorem~\ref{th:main},
for every $k\in\N$.
Therefore, we may assume that $f \in L^\infty$ (it suffices to replace $f$ by $f_{k_0}$).

\smallskip

Summarizing, without any loss of generality, henceforth \emph{we assume that $f$ is a
bounded probability density}, that is $f: \R \to \R$ is a measurable function such that
\begin{equation}\label{eq:newass}
	f(x) \ge 0 \quad \forall x \in \R \,, \qquad
	\int_\R f(x) \, \dd x = 1 \,, \qquad
	\sup_{x\in\R} f(x) \,<\, \infty \,,
\end{equation}
and our goal is to show that \eqref{eq:aimm} holds true for some $k \in \N$.

\subsection{A sequence of upper bounds}

By Markov's inequality, for all $t\ge 0$ we can write
\begin{equation} \label{eq:ass'}
	g_1(t) \,:=\, \int_{|x| \ge t} f(x) \, \dd x 
	\,\le\, \min\bigg\{ 1, \,
	\frac{1}{t^{\epsilon}} \int_{|x| \ge t} |x|^{\epsilon} f(x) \, \dd x \bigg\}
	\,\le\, \min\bigg\{1, \frac{C}{t^\epsilon}\bigg\} \,.
\end{equation}
Analogously, for every $k\in\N$, since
$\ind_{\{|x_1 + \ldots + x_k| \ge t\}} \le \sum_{i=1}^k \ind_{\{|x_i| \ge t/k\}}$,
for all $t\ge 0$ we obtain
\begin{equation} \label{eq:assn'}
\begin{split}
	g_k(t) & \,:=\, \int_{|x| \ge t} f_k(x) \, \dd x \,=\, 
	\int_{\R^k} f(x_1) \cdots f(x_k) \, 	\ind_{\{|x_1 + \ldots + x_k| \ge t\}}
	\, \dd x_1 \cdots \dd x_k \\
	& \,\le\, \min\bigg\{1,  k \, g_1\bigg( \frac{t}{k} \bigg) \bigg\}
	\,\le\, \min\bigg\{1, \frac{k^{1+\epsilon} C}{t^\epsilon} \bigg\} \,.
\end{split}
\end{equation}
For consistency and for later convenience, for $t \le 0$ we set 
$g_k(t) := g_k(0) = 1$, for all $k \in \N$.
It follows from \eqref{eq:assn'} that for all $k\in\N$
\begin{equation} \label{eq:integrabl}
	\int_\R g_{k} \bigg( \frac{|w|}{3} - 1 \bigg)^{p} \, \dd w
	\,=\, 6 \, g_k(0) + 2 \cdot 3 \int_0^\infty g_k(t)^p \, \dd t
	\,<\, \infty \,, \qquad \forall p \,>\, \frac{1}{\epsilon} \,.
\end{equation}

For every $n\in\N$, we define a map
$\Phi_{n}: L^\infty \to L^\infty$ by
\begin{equation} \label{eq:defPhi}
	(\Phi_{n}(h))(x) \,:=\, 2 \int_{|z| > (|x| - 3)/2} \dd z \,
	f_{n}(z) \, h(x-z) \,, \qquad \forall x \in \R \,.
\end{equation}
The reason for such a definition is explained by the following crucial lemma,
which will enable us to bound
the left hand side of \eqref{eq:aimm} by the integral of a suitable function.

\begin{lemma} \label{th:basic}
Let $h \in L^\infty(\R, Leb)$, with $h \ge 0$, and $\ell \in\N$ be such that
\begin{equation} \label{eq:hyy}
	\sup_{x \in [a,a+1)} f_{\ell}(x) \le
	\int_{a-1}^{a+2} h(w) \, \dd w \,, \qquad
	\forall a \in \R \,.
\end{equation}
Then
\begin{equation} \label{eq:thh}
	\sup_{x \in [a,a+1)} f_{2\ell}(x) \le
	\int_{a-1}^{a+2} \Phi_{\ell}(h)(w) \, \dd w \,, \qquad
	\forall a \in \R \,.
\end{equation}
\end{lemma}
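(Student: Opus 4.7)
The plan is to combine a Feller-type symmetrization with a direct application of hypothesis \eqref{eq:hyy}, exploiting the slack built into the definition of $\Phi_\ell$.

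First, I would perform a symmetrization with a cutoff tailored to the interval $[a,a+1)$. Observe that for every $x\in[a,a+1)$ one has $|x|\ge\max(0,|a|-1)$, since $||x|-|a||\le|x-a|<1$. Set $T:=\max(0,|a|-1)/2$. If $|z|\le T$ then the triangle inequality gives $|x-z|\ge|x|-|z|\ge 2T-T=T$. Splitting $f_{2\ell}(x)=\int_\R f_\ell(z)f_\ell(x-z)\,\dd z$ at $|z|\lessgtr T$ and substituting $z\mapsto x-z$ in the low-$|z|$ piece yields, uniformly in $x\in[a,a+1)$,
\[
f_{2\ell}(x)\,\le\, 2\int_{|z|>T} f_\ell(z)\,f_\ell(x-z)\,\dd z.
\]

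Next, for each fixed $z$, applying \eqref{eq:hyy} with the parameter $a-z$ in place of $a$ gives
\[
\sup_{x\in[a,a+1)} f_\ell(x-z)\,=\,\sup_{y\in[a-z,a-z+1)} f_\ell(y)\,\le\,\int_{a-z-1}^{a-z+2}\!h(u)\,\dd u\,=\,\int_{a-1}^{a+2}\!h(w-z)\,\dd w,
\]
after the change of variable $w=u+z$. Combining with the previous display, using $\sup\int\le\int\sup$ for non-negative integrands, and invoking Fubini,
\[
\sup_{x\in[a,a+1)} f_{2\ell}(x)\,\le\, 2\int_{a-1}^{a+2}\!\int_{|z|>T} f_\ell(z)\,h(w-z)\,\dd z\,\dd w.
\]

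To identify the right-hand side with $\int_{a-1}^{a+2}\Phi_\ell(h)(w)\,\dd w$, observe that for every $w\in[a-1,a+2]$ we have $|w|\le|a|+2$, hence $(|w|-3)/2\le(|a|-1)/2\le T$ (with equality if $|a|\ge 1$; if $|a|<1$ then $(|w|-3)/2<0\le T$). Therefore $\{|z|>T\}\subseteq\{|z|>(|w|-3)/2\}$ and, by non-negativity of $f_\ell(z)\,h(w-z)$,
\[
2\int_{|z|>T} f_\ell(z)\,h(w-z)\,\dd z\,\le\, 2\int_{|z|>(|w|-3)/2} f_\ell(z)\,h(w-z)\,\dd z\,=\,\Phi_\ell(h)(w).
\]
Integrating in $w$ over $[a-1,a+2]$ delivers \eqref{eq:thh}.

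The main obstacle is recognizing that the constants in the statement — the length-$3$ interval $[a-1,a+2]$, the length-$1$ supremum interval $[a,a+1)$, and the $3$ appearing in $(|x|-3)/2$ in definition \eqref{eq:defPhi} — are calibrated precisely so that $T$, which emerges naturally as $\max(0,|a|-1)/2$ from the symmetrization, dominates $(|w|-3)/2$ uniformly for $w\in[a-1,a+2]$; this is what permits the (crucial) domain comparison in the final step to go in the correct direction, converting the $T$-cutoff bound into a bound in terms of $\Phi_\ell(h)$ itself.
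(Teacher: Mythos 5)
Your proof is correct, and at its core it is the same argument as the paper's: Feller's symmetrization of $f_{2\ell}=f_\ell*f_\ell$, followed by the translated form of hypothesis \eqref{eq:hyy}, Tonelli, and the observation that the constants ($[a,a+1)$, $[a-1,a+2]$, and the $-3$ in \eqref{eq:defPhi}) are calibrated to make the domains match. The one genuine difference is how the cutoff in the symmetrization is handled. The paper uses the pointwise cutoff $|z|>|x|/2$ (cf.\ \eqref{eq:easyboth}); since this domain varies with $x$ as $x$ ranges over $[a,a+1)$, one cannot simply exchange $\sup_x$ with $\int \dd z$, and the paper must first upgrade \eqref{eq:hyy} to the indicator version \eqref{eq:gen2}, absorbing the $x$-dependence into the shift $t\mapsto t+3$ on the $w$-side. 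You instead choose the cutoff $T=\max(0,|a|-1)/2$ uniformly over the unit interval, so that $\{|z|>T\}$ is independent of $x$; the interchange $\sup_x\int\le\int\sup_x$ is then immediate, only the plain translated hypothesis is needed, and the slack is spent at the very end via the inclusion $\{|z|>T\}\subseteq\{|z|>(|w|-3)/2\}$ for $w\in[a-1,a+2]$. This yields a slightly leaner proof that dispenses with the auxiliary estimate \eqref{eq:gen2}, at the cost of a symmetrization bound tied to the interval $[a,a+1)$ rather than the clean pointwise inequality \eqref{eq:easyboth}. All your estimates check out, including $|x|\ge 2T$ on $[a,a+1)$, the measure-zero boundary $\{|z|=T\}$, $(|w|-3)/2\le T$ on $[a-1,a+2]$, and the degenerate case $|a|<1$, $T=0$.
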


\begin{proof}
We start giving a couple of slight generalizations of \eqref{eq:hyy}.
First, by a straightforward translation argument,
\begin{equation} \label{eq:gen1}
	\sup_{x \in [a,a+1)} f_{\ell}(x - z) \,\le\,
	\int_{a-1}^{a+2} h(w - z) \, \dd w \,, \qquad
	\forall a,z \in \R\,.
\end{equation}
Next we claim that, for every $z \in \R$ and $t > 0$,
\begin{equation} \label{eq:gen2}
	\sup_{x \in [a,a+1)} \Big( f_{\ell}(x - z) \ind_{\{|x| < t\}} \Big)
	\le \int_{a-1}^{a+2} h(w - z) \ind_{\{|w| < t + 3\}}
	\, \dd w \,, \qquad
	\forall a,z \in \R \,, \ \forall t > 0\,.
\end{equation}
In fact, if $t < |a| - 1$ then the left hand side of \eqref{eq:gen2} is zero
and there is nothing to prove. On the other hand, if $t \ge |a| - 1$
the right hand sides of \eqref{eq:gen2} and \eqref{eq:gen1} coincide,
because $\ind_{\{|w| \le t + 3\}} = 1$ for all $w$ in the domain
of integration $(a-1, a+2)$, 
hence \eqref{eq:gen2} follows from \eqref{eq:gen1}
simply because $f_{\ell}(x - z) \ind_{\{|x| \le t\}} \le f_{\ell}(x - z)$.

Since $f_{2\ell} = f_\ell * f_\ell$, for all $x \ge 0$ we can write
\begin{equation*}
	f_{2\ell}(x) = \int_\R f_\ell(z) f_\ell(x-z) \, \dd z
	= 2 \int_{z > x/2} f_\ell(z) f_\ell(x-z) \, \dd z \,,
\end{equation*}
having exploited the symmetry $z \leftrightarrow x-z$.
For $x \le 0$ we can write an analogous formula, with $\{z > x/2\}$
replaced by $\{z < -x/2\}$. We can combine these relations in
the following single inequality:
\begin{equation} \label{eq:easyboth}
	f_{2\ell}(x) \le 2 \int_{|z| > |x|/2} f_\ell(z) f_\ell(x-z) \, \dd z \,,
	\qquad \forall x\in\R \,,
\end{equation}
hence
\begin{equation*}
	\sup_{x \in [a,a+1)} f_{2\ell}(x) 
	\le 2 \int_\R \dd z \, f_\ell(z) 
	\sup_{x \in [a,a+1)} \Big( f_\ell(x-z)
	\ind_{\{|x| < 2|z|\}} \Big) \,, \qquad
	\forall a,x \in \R, \ \forall t > 0 \,.
\end{equation*}
We now apply \eqref{eq:gen2}, getting
\begin{equation*}
\begin{split}
	\sup_{x \in [a,a+1)} f_{2\ell}(x) 
	& \le 2 \int_\R \dd z \, f_\ell(z) 
	\int_{a-1}^{a+2} \dd w \, h(w - z) \ind_{\{|w| < 2|z| + 3\}} \\
	& = \int_{a-1}^{a+2} \dd w \, \bigg( 2
	\int_\R \dd z \, f_\ell(z) \,  h(w - z) \ind_{\{|w| < 2|z| + 3\}} \bigg) \,,
\end{split}
\end{equation*}
which, recalling \eqref{eq:defPhi}, is exactly \eqref{eq:thh}.
\end{proof}

Applying iteratively Lemma~\ref{th:basic} we now obtain a sequence
of upper bounds for $f_{2^n}(\cdot)$. Let us start with $n=1$, i.e.,
with $f_{2^1}(\cdot) = f_2(\cdot)$, showing that \eqref{eq:hyy}
holds for a suitable choice of $h(\cdot)$.
Recalling \eqref{eq:easyboth} and \eqref{eq:ass'}, for all $x \in \R$ we can write
\begin{equation*}
	f_2(x) \,\le\, 2 \int_{|z| > |x|/2} \dd z \, f(z) \, f(x-z)
	\,\le\, 2 \, \|f\|_\infty \int_{|z| > |x|/2} \dd z \, f(z)
	\,=\, 2\, \|f\|_\infty\, g_1(|x|/2) \,.
\end{equation*}
The function $x \mapsto g_1(|x|/2)$ is non-decreasing for $x \le 0$ and non-increasing
for $x \ge 0$, hence for every $a \ge 1$ we have
\begin{equation*}
	\sup_{x \in [a, a + 1)} f_2(x) \le 2\, \|f\|_\infty\, g_1(a/2) 
	\le \int_{a-1}^{a} 2\, \|f\|_\infty\, g_1(|w|/2)  \, \dd w \,,
\end{equation*}
and analogously for $a \le -2$
\begin{equation*}
	\sup_{x \in [a, a + 1)} f_2(x) \le 2\, \|f\|_\infty\, g_1(|a+1|/2) 
	\le \int_{a+1}^{a+2} 2\, \|f\|_\infty\, g_1(|w|/2)  \, \dd w \,.
\end{equation*}
Altogether, for every $a \in \R \setminus [-2, 1]$
\begin{equation} \label{eq:quasi}
	\sup_{x \in [a,a+1)} f_2(x) \le
	\int_{a - 1}^{a + 2} 2\, \|f\|_\infty\, g_1(|w|/2)  \, \dd w \,.
\end{equation}

Let us show that an analogous estimate holds also for $a \in [-2, 1]$.
Note that the right hand side of \eqref{eq:quasi} is a continuous function of $a$.
Furthermore, it is strictly positive
for $a \in [-2, 1]$, because 
$g_1(|w|/2)$ is strictly positive in a neighborhood of $w=0$
--- it is continuous and it equals one in zero, cf. \eqref{eq:ass'}
--- and $0$ is in the domain of
integration $[a-1, a+2]$ for every $a \in [-2, 1]$.
Therefore, the infimum of the right hand side of \eqref{eq:quasi} over
the compact interval $a \in [-2, 1]$,
call it $B$, is strictly positive. As the left hand side of \eqref{eq:quasi}
is bounded from above by $\|f_2\|_\infty \le \|f\|_\infty$, it follows that 
relation \eqref{eq:quasi} holds for $a \in [-2,1]$ provided
we multiply the right hand side by the constant $\|f\|_\infty/B$.
Summarizing, for all $a \in \R$
\begin{equation} \label{eq:quasi2}
	\sup_{x \in [a,a+1)} f_2(x) \le
	D \int_{a - 1}^{a + 2} \overline g_1(|w|/2)  \, \dd w \,, \quad \
	\text{where} \ \ \ D \,:=\, 2 \, \|f\|_\infty \,
	\max\bigg\{1, \frac{\|f\|_\infty}{B} \bigg\} \,.
\end{equation}

We have thus shown that \eqref{eq:hyy} holds true for 
$f_2$, with $h(\cdot) = D\, \overline g_1(|\cdot|/2)$.
Applying iteratively Lemma~\ref{th:basic}, for every $n \in\N$ we obtain
\begin{equation} \label{eq:eheh}
	\sup_{x \in [a,a+1)} f_{2^n}(x) \le 
	\int_{a-1}^{a+2} \overline h_n(x) \, \dd x
	\,, \qquad \forall a \in \R\,,
\end{equation}
where recalling \eqref{eq:quasi2} we set
\begin{equation} \label{eq:defhn}
\begin{split}
	\overline h_1(x) & \,:=\, D \, g_1(|x|/2) \,, \\
	\overline h_n(x) & \,:=\, \big( \Phi_{2^{n-1}}(\overline h_{n-1}) \big)(x) \,=\,
	\big( (\Phi_{2^{n-1}} \circ \Phi_{2^{n-2}}
	\circ \ldots \circ \Phi_{2})(\overline h_1) \big)(x) \,, \quad \ \forall n \ge 2\,.
\end{split}
\end{equation}

\subsection{Conclusion}

Let us observe that, for every $\epsilon > 0$,
\begin{equation*}
	c_{\epsilon} := \sup_{a \in \R} \bigg(
	\frac{\sup_{z \in [a-1,a+2]} (1+|z|^{\epsilon })}
	{\inf_{z \in [a-1,a+2]} (1+|z|^{\epsilon })} \bigg) < \infty \,,
\end{equation*}
and we can write $(1+|x|^\epsilon) \le c_\epsilon (1+|x'|^\epsilon)$ for all $x, x' \in \R$
with $|x-x'| \le 3$.
Applying \eqref{eq:eheh}, it follows that
we can estimate the left hand side of \eqref{eq:aimm} for $k=2^n$ as follows:
\begin{equation*}
\begin{split}
	& \sum_{m \in \Z} \sup_{z \in [m, (m+1))}
	\big( (1+|z|^{\epsilon }) \,  f_{2^n}(z) \big) \le 
	\sum_{m \in \Z} c_{\epsilon}\, (1+|m|^{\epsilon }) \,
	\sup_{z \in [m, (m+1))} 	f_{2^n}(z) \\
	& \qquad \,\le\, \sum_{m \in \Z} c_{\epsilon}\, (1+|m|^{\epsilon })
	\int_{(m-1) }^{(m+2)} \overline h_n(w)  \, \dd w
	\,\le\, \sum_{m \in \Z} c_{\epsilon}^2
	\int_{(m-1) }^{(m+2)} (1+|w|^{\epsilon }) \, \overline h_n(w)  \, \dd w \\
	& \qquad \,=\, 3\, c_{\epsilon}^2 \int_\R (1+|w|^{\epsilon }) \, \overline h_n(w)  \, \dd w \,.
\end{split}
\end{equation*}
Therefore to prove \eqref{eq:aimm} it suffices to show that
there exists $n \in \N$ large enough such that
\begin{equation} \label{eq:aimm2}
	\int_\R (1+|w|^{\epsilon}) \, \overline h_n(w)  \, \dd w \,<\, \infty \,,
\end{equation}
where $\overline h_n(\cdot)$ is defined in \eqref{eq:defhn}.
To this purpose, we show that the maps $\Phi_{n}$ are regularizing.

\begin{lemma} \label{th:Phireg}
If $h \in L^\infty \cap L^q$, for some $q \in (1,\infty)$, then
$\Phi_{n}(h) \in L^p$ for every $p \in ((1-\epsilon)q, \infty)
\cap [1,\infty)$ and for every $n\in\N$.
\end{lemma}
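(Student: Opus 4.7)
The plan is to apply \emph{Minkowski's integral inequality} to the pointwise representation of $\Phi_n(h)$, so as to separate the roles of $f_n$ and $h$, and then to exploit both the moment bound $\int_\R |z|^\epsilon f_n(z) \, \dd z < \infty$ from~\eqref{eq:momentk} and the $L^q$--regularity of $h$. The key observation is that the indicator in the definition of $\Phi_n$ can be rewritten as $\ind_{\{|z| > (|x|-3)/2\}} = \ind_{\{|x| < 2|z|+3\}}$, so that for fixed $z$ the corresponding $x$--integration runs over an interval whose length grows only linearly in $|z|$. This is what will eventually allow $\Phi_n(h) \in L^p$ for some $p$ strictly smaller than $q$.

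The easy regime $p \ge q$ is disposed of directly: from the bound $\Phi_n(h) \le 2(f_n * |h|)$, Young's inequality combined with the trivial estimate $\|f_n * h\|_\infty \le \|f_n\|_1 \|h\|_\infty$ gives $f_n * |h| \in L^q \cap L^\infty$, and interpolation yields $\Phi_n(h) \in L^p$ for every $p \in [q,\infty]$.

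For the main range $\max\{1, (1-\epsilon)q\} < p < q$, Minkowski (applicable since $p \ge 1$) yields
$$\|\Phi_n(h)\|_p \le 2 \int_\R f_n(z) \bigg( \int_{\{|x| < 2|z|+3\}} |h(x-z)|^p \, \dd x \bigg)^{\!\! 1/p} \dd z.$$
After the change of variables $y = x-z$ the inner integral runs over an interval of length $4|z|+6$; since $p < q$, H\"older's inequality gives $\int_I |h(y)|^p \, \dd y \le |I|^{1-p/q} \|h\|_q^p$, and therefore
$$\|\Phi_n(h)\|_p \le 2\, \|h\|_q \int_\R f_n(z) \, (4|z|+6)^{1/p-1/q} \, \dd z.$$
This integral is finite as soon as $1/p - 1/q \le \epsilon$, i.e.\ $p \ge q/(1+\epsilon q)$, by the moment bound \eqref{eq:momentk} (and the elementary inequality $(4|z|+6)^{\alpha} \le C_\alpha (1+|z|^\alpha)$ for $0 \le \alpha \le \epsilon$).

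The only remaining step, and essentially the only ``obstacle'', is the bookkeeping verification that every $p \in ((1-\epsilon)q, \infty) \cap [1, \infty)$ satisfies $p \ge q/(1+\epsilon q)$. When $(1-\epsilon)q \ge 1$, this amounts to $(1-\epsilon)q \ge q/(1+\epsilon q)$, i.e.\ $(1-\epsilon)(1+\epsilon q) \ge 1$, equivalent to $q(1-\epsilon) \ge 1$, which holds by hypothesis; when $(1-\epsilon)q < 1$ one instead has $q/(1+\epsilon q) \le 1 \le p$. No substantial analytic input beyond Minkowski, H\"older, Young and the moment bound of \S\ref{sec:prep} is required.
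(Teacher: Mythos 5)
Your proof is correct, but it takes a genuinely different route from the paper's. The paper first applies Jensen's inequality to push the $p$-th power inside the defining integral of $\Phi_n(h)$ (using that the restricted mass $\int_{|z|>(|x|-3)/2} f_n\,\dd z\le 1$), then Fubini and the change of variables $w=x-z$ to arrive at the weighted bound $\|\Phi_n(h)\|_p^p\le 2^p\int_\R |h(w)|^p\,g_n(|w|/3-1)\,\dd w$, and finally H\"older with exponents $(\tfrac{1}{1-\gamma},\tfrac{1}{\gamma})$ for $\gamma<\epsilon$, exploiting the integrability of $g_n(|w|/3-1)^{1/\gamma}$ from \eqref{eq:integrabl}; this costs the factor $(1-\gamma)$ in the exponent and uses $h\in L^\infty$ to raise $h$ to the exponent $p/(1-\gamma)\ge q$. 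You instead keep the first power, apply Minkowski's integral inequality to separate the $z$- and $x$-integrations, and pay for the drop from $q$ to $p$ by a local H\"older estimate on the interval of length $4|z|+6$, which converts directly into the $\epsilon$-moment of $f_n$ from \eqref{eq:momentk}. The two arguments thus rest on the same two ingredients --- the truncation $|x|<2|z|+3$ and the $\epsilon$-moment of $f_n$, the latter entering the paper's proof through the tail bound \eqref{eq:assn'} rather than through \eqref{eq:momentk} --- and your exponent bookkeeping $p\ge q/(1+\epsilon q)\iff 1/p-1/q\le\epsilon$ is verified correctly in both cases. Two cosmetic remarks: your ``main range'' should be written with $p\ge\max\{1,(1-\epsilon)q\}$ rather than a strict inequality, so as not to omit $p=1$ when $(1-\epsilon)q<1$ (Minkowski at $p=1$ is just Tonelli, and your final verification already covers $p=1$, so nothing changes); and it is worth noting that for $p<q$ your argument does not use $h\in L^\infty$ at all, so it is marginally more economical in hypotheses than the paper's.
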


\begin{proof}
Recalling the definition \eqref{eq:defPhi} of the operator $\Phi_n$,
it follows by Jensen's inequality that for
every $h \in L^\infty$ and $x \in \R$
\begin{equation} \label{eq:Jen}
	|\Phi_{n} (h)(x)|^p
	\,\le\, 2^p \,	\int_{|z| > (|x| - 3)/2} \dd z \, f_{n}(z) \, |h(x-z)|^p \,,
	\qquad \forall p \ge 1 \,,
\end{equation}
hence
\begin{equation*}
	\|\Phi_{n} (h)\|_p^p \,=\, \int_\R \dd x \, |\Phi_{n} (h)(x)|^p
	\,\le\, 2^p \, \int_\R \dd z \, f_{n}(z) 
	\int_{|x| < 2|z| + 3} \dd x \, |h(x-z)|^p \,.
\end{equation*}
Under the change of variables $x \to w := x-z$,
the domain $\{|x|< 2|z| + 3\}$ becomes 
$\{-(2|z| + 3)-z < w < (2|z| + 3) - z\} \subseteq
\{-3(|z| + 1) < w < 3(|z| + 1)\}$. Therefore enlarging the domain of integration
and recalling that $g_n(t) := \int_{|z|>t} \dd z \, f_n(z)$ we obtain
\begin{equation*}
\begin{split}
	\|\Phi_{n}(h)\|_p^p & \,\le\, 2^p
	\, \int_\R \dd z \, f_{n}(z) 
	\int_{|w| < 3(|z| + 1)} \dd w \, |h(w)|^p \\
	& \,=\, 2^p \, \int_\R \dd w \, |h(w)|^p
	\int_{|z| > |w|/3 - 1} \dd z \,  f_{n}(z)
	\,=\, 2^p \, \int_\R \dd w \, |h(w)|^p
	\, g_{n} ( |w|/3 - 1 ) \,,
\end{split}
\end{equation*}
where we recall that $g_n(t) := g_n(0) = 1$ for $t < 0$.
For every $\gamma \in (0,1)$, by H\"older's inequality we then obtain
\begin{equation} \label{eq:bel}
\begin{split}
	\|\Phi_{n}(h)\|_p^p & 
	\,\le\, 2^p \, \left( \int_\R \dd w \, |h(w)|^{p/(1-\gamma)} \right)^{1-\gamma}
	\left( \int_\R \dd w \, g_{n} (|w|/3 - 1)^{1/\gamma} \right)^{\gamma} \,.
\end{split}
\end{equation}
Looking back at \eqref{eq:integrabl}, we see that the second integral
in the right hand side is finite for all $\gamma \in (0,\epsilon)$.
Now observe that if $h \in L^\infty \cap L^q$
then $h \in L^{q'}$ for all $q' \ge q$, therefore
the first integral in the right hand side is finite
whenever $p/(1-\gamma) \ge q$. Summarizing, we have shown that
$\|\Phi_{n}(h)\|_p < \infty$, that is $\Phi_{n}(h) \in L^p$,
for every $p \in [1,\infty)$ (recall relation \eqref{eq:Jen})
such that $p/(1-\gamma) \ge q$ for some $\gamma \in (0,\epsilon)$, i.e.,
for every $p \in [1,\infty) \cap ((1-\epsilon)q, \infty)$.
\end{proof}

We are almost done. Recall that we need to show that \eqref{eq:aimm2}
holds true for $n$ large enough, where
$\overline h_n(\cdot)$ is defined in \eqref{eq:defhn}.
By \eqref{eq:ass'}, we know that $\| \overline h_1 \|_q < \infty$ for
$q = 2/\epsilon$. Applying iteratively Lemma~\ref{th:Phireg}, it follows that
$\| \overline h_n \|_p < \infty$ for all
$p \in ((1-\epsilon)^{n-1} q, \infty) \cap [1,\infty)$. By choosing $n$ large
enough we may assume henceforth that $\| \overline h_{n-1} \|_1 = \int_\R
\overline h_{n-1}(w) \, \dd w < \infty$.

By definition we have
$\overline h_{n} = \Phi_{2^{n-1}}(\overline h_{n-1})$,
therefore recalling \eqref{eq:defPhi} we can write
\begin{equation*}
\begin{split}
	\int_\R (1+|x|^{\epsilon }) \, \overline h_n(x)  \, \dd x &\,=\,
	2 \int_\R \dd x 	\int_{|z| > (|x| - 3)/2} \dd z
	\, (1+|x|^{\epsilon }) \,
	f_{2^{n-1}}(z) \, \overline h_{n-1}(x-z) \,.
\end{split}
\end{equation*}
In the domain $\{|z| > (|x| - 3)/2\}$ we have
$|x|^\epsilon \le (2|z| + 3)^\epsilon \le 2^\epsilon(2^\epsilon |z|^\epsilon + 3^\epsilon)$,
because $(a + b)^\epsilon \le (2 \max\{a,b\})^\epsilon \le 2^\epsilon (a^\epsilon + b^\epsilon)$ 
for all $a,b,\epsilon \ge 0$. Therefore
\begin{equation*}
\begin{split}
	\int_\R (1+|x|)^{\epsilon } \, \overline h_n(x)  \, \dd x & \,\le\,
	2 \int_\R \dd z \, (1+ 6^\epsilon + 4^\epsilon|z|^{\epsilon}) \, f_{2^{n-1}}(z)
	\bigg( \int_{|x| < 2|z| + 3} \dd x \, \overline h_{n-1}(x-z) \bigg) \\
	& \,\le\, 2 \, \| \overline h_{n-1} \|_1 \,
	\int_\R \dd z \, (1+ 6^\epsilon + 4^\epsilon|z|^{\epsilon}) \, f_{2^{n-1}}(z) 
	\,<\, \infty \,,
\end{split}
\end{equation*}
thanks to \eqref{eq:momentk}. The proof of \eqref{eq:aimm},
and hence of Theorem~\ref{th:main}, is complete.

\appendix

\section{Some technical proofs}
\label{sec:tech}

\subsection{Proof of Lemma~\ref{th:necsuf}}
\label{sec:proofnecsuf}

Observe that $\widehat f \in L^\infty$ for every $f \in L^1$.
We recall Theorem~3 in~\cite[\S XV.3]{cf:Fel2} and its Corollary,
concerning a function $g \in L^1$ and its Fourier transform $\widehat g$:
\begin{equation}\label{eq:correspo}
	\text{if $\widehat g \in L^1$, then $g \in L^\infty$ (and is continuous);}
	\quad
	\text{if $g \in L^\infty$ and if $\widehat g \ge 0$, then $\widehat g \in L^1$.}
\end{equation}
%

Assume that $\widehat f \in L^p$ for some $p \in [1,\infty)$.
Since $\widehat f \in L^\infty$, it follows that 
$\widehat f \in L^q$ for every $q \in [p,\infty]$.
Since Fourier transform turns
convolutions into products, we have $\widehat{f_{n}} =(\widehat{f})^n$ for every $n\in\N$,
therefore for all $n \ge p$ we have $\widehat{f_{n}} \in L^1$ and, by 
\eqref{eq:correspo}, $f_{n} \in L^\infty$. Therefore $f$ 
satisfies assumption \eqref{it:1} of Theorem~\ref{th:main}.

Now assume that $f_{k_0} \in L^\infty$ for some $k_0 \in \N$. If we define
$g(x) := f_{k_0}(-x)$, we have $\widehat g = (\widehat{f_{k_0}})^* =
((\widehat{f})^{k_0})^*$,
where we denote by $a^*$ the complex conjugate of $a\in \C$.
It follows that $\widehat{f_{k_0} * g} =
\widehat{f_{k_0}} * \widehat{g} = |\widehat{f}|^{2 k_0} \ge 0$.
Note that $f_{k_0} * g \in L^\infty$, because both $f_{k_0}$
and $g$ are in $L^1 \cap L^\infty$, therefore, by \eqref{eq:correspo},
$\widehat{f_{k_0} * g} = |\widehat{f}|^{2 k_0} \in L^1$,
that is $\widehat{f} \in L^{2 k_0}$.

\subsection{Proof of Lemma~\ref{th:suf}}
\label{sec:proofsuf}

Let us denote by $\|\cdot\|_{p}$ the norm in $L^p$, i.e.,
\begin{equation*}
	\|h\|_p \,:=\, \bigg( \int_\R |h(w)|^p \, \dd w \bigg)^{1/p}
	\quad \text{if} \ p \in [1,\infty)\,, \qquad
	\|h\|_\infty \,:=\, \mathrm{ess\,sup}_{x\in\R} |h(x)| \,.
\end{equation*}
Recalling \eqref{eq:conv}, by H\"older's inequality
\begin{equation*}
\begin{split}
	|f_2(x)| & \,\le\,  \bigg( \int_\R |f(x-y)| \, |f(y)|^p \, \dd y \bigg)^{\frac{1}{p}}
	\, \|f\|_1^{\frac{p-1}{p}}  \\
	& \,\le\, \bigg( \int_\R |f(x-y)|^p \, |f(y)|^p \, \dd y \bigg)^{\frac{1}{p^2}}
	\, \|f\|_1^{\frac{p-1}{p}} \, \|f\|_p^{\frac{p-1}{p}} \,,
\end{split}
\end{equation*}
therefore $f_2 \in L^1 \cap L^{p^2}$.
By iteration, $f_{2^k} \in L^1 \cap L^{p^{2^k}}$ for every $k\in\N$.
If $k$ is such that $p^{2^k} \ge 2$, it follows that $f_{2^k} \in L^2$. 
Since Fourier transform is an isometry on $L^2$, it follows
that $\widehat {f_{2^k}} = (\widehat {f})^{2^k} \in L^2$, 
that is $\widehat f \in L^{2^{k+1}}$.
The conclusion follows by Lemma~\ref{th:necsuf}.


\subsection{Proof of relation \eqref{eq:basto}}
\label{sec:proofbasto}

We recall that $g \ge 0$ by assumption.
Arguing as in \cite[\S1.8]{cf:Tsi},
for fixed $x,x' \in \R$ and $\delta, \delta' > 0$, let $A$ denote
the set of $(m,m') \in \Z^2$ such that
\begin{equation*}
	[m'\delta' - x', (m'+1)\delta' - x') \,\cap\, [m\delta-x, (m+1)\delta-x) \,\ne\, \emptyset \,.
\end{equation*}
For every $m'\in\Z$, there are at most $(\delta'/\delta) + 2$ values of $m \in \Z$
for which $(m,m') \in A$, hence
\begin{equation*}
\begin{split}
	& \sum_{m\in\Z} \, \sup_{z \in [m\delta, (m+1)\delta)} g(z-x) 
	\,\le\, \sum_{m\in\Z} \, \max_{m' \in \Z: \, (m,m') \in A}
	\sup_{z \in [m'\delta', (m'+1)\delta')} g(z-x') \\
	& \qquad \,\le\, \sum_{(m,m') \in A} 
	\sup_{z \in [m'\delta', (m'+1)\delta')} g(z-x')
	\,\le\, \bigg(\frac{\delta'}{\delta} + 2\bigg) 
	\sum_{m'\in\Z} \, \sup_{z \in [m'\delta', (m'+1)\delta')} g(z - x')  \,.
\end{split}
\end{equation*}
Now denote by $B$ the set of 
$(m,m') \in \Z^2$ such that
\begin{equation*}
	[m\delta-x, (m+1)\delta-x) \,\subseteq\, [m'\delta' - x', (m'+1)\delta' - x') \,.
\end{equation*}
Plainly, for every $m$ there is \emph{at most} one value of $m'$ such that $(m,m') \in B$,
while for every $m'$ there are \emph{at least} $(\delta'/\delta) - 2$ values of $m \in \Z$
for which $(m,m') \in B$. Therefore
\begin{equation*}
\begin{split}
	\sum_{m\in\Z} \, \inf_{z \in [m\delta, (m+1)\delta)} g(z-x) 
	& \,\ge\, \sum_{m\in\Z} \, \sum_{m'\in\Z} \, \ind_{\{(m,m') \in B\}}
	\inf_{z \in [m\delta, (m+1)\delta)} g(z-x)  \\
	& \,\ge\, \sum_{m'\in\Z} \, \inf_{z \in [m'\delta', (m'+1)\delta')} g(z-x')
	\Bigg( \sum_{m'\in\Z} \, \ind_{\{(m,m') \in B\}} \Bigg) \\
	& \,\ge\, \bigg( \frac{\delta'}{\delta} - 2 \bigg)
	\sum_{m'\in\Z} \, \inf_{z \in [m'\delta', (m'+1)\delta')} g(z-x')  \,.
\end{split}
\end{equation*}
The previous relations shown that, for all $x,x' \in \R$ and $\delta, \delta' > 0$,
\begin{equation} \label{eq:basto2}
	S^g_{\delta}(x) \,\le\, \bigg( 1 + 2 \frac{\delta}{\delta'} \bigg) S^g_{\delta'}(x') \,,
	\qquad
	s^g_{\delta}(x) \,\ge\, \bigg( 1 - 2 \frac{\delta}{\delta'} \bigg) s^g_{\delta'}(x') \,,
\end{equation}
which implies in particular \eqref{eq:basto}.


\subsection{Bootstrapping relation \eqref{eq:aimm}}
\label{sec:bootaimm}

Let us set $g_k(x) := (1+|x|^\epsilon) |f_k(x)|$, so that
the left hand side of \eqref{eq:aimm} can be expressed as $S^{g_k}_1(0)$
(recall \eqref{eq:SsR}).
Since $(a+b)^\epsilon \le 2^\epsilon (a^\epsilon + b^\epsilon)$
for $a,b,\epsilon \ge 0$, recalling \eqref{eq:conv} we can write
\begin{align*}
	S^{g_{k+1}}_1(0) & \,\le\, 2^\epsilon \, \int_\R 
	|f(y)| \,  \sum_{m \in \Z} \bigg( \sup_{z \in [m, (m+1))}
	(1+|z-y|^{\epsilon} + |y|^\epsilon) \, |f_{k}(z-y)| \bigg) \, \dd y \\
	& \,=\, 2^\epsilon \bigg\{ \bigg( \int_\R |f(y)| \,
	S^{g_{k}}_1(z-y) \, \dd y \bigg)
	\,+\, \bigg( \int_\R |y|^\epsilon \, |f(y)| \,
	S^{|f_{k}|}_1(z-y) \, \dd y \bigg) \bigg\} \\
	& \,\le\, 3 \cdot 2^\epsilon \bigg\{ \bigg( \int_\R |f(y)| \, \dd y \bigg)
	\,+\, \bigg( \int_\R |y|^\epsilon \, |f(y)| \, \dd y \bigg) \bigg\}
	\, S^{g_k}_1(0) \,,
\end{align*}
where the last inequality follows from \eqref{eq:basto} with
$x = z-y$, $x' = 0$ and $\delta = \delta' = 1$.
The term in brackets is finite, by the assumptions of Theorem~\ref{th:main}, hence
$S^{g_{k+1}}_1(0) < \infty$ if $S^{g_k}_1(0) < \infty$. This shows that
if \eqref{eq:aimm} holds for $k = \overline{k}$, then it holds for all $k \ge \overline{k}$.

\end{document}